\newcommand{\bq}{\begin{equation}}
\newcommand{\eq}{\end{equation}}
\newcommand{\R}{{ \mathbb{R}  }}
\newcommand{\bke}[1]{\left( #1 \right)}
\newcommand{\norm}[1]{\left\Vert #1 \right\Vert}
\newcommand{\abs}[1]{\left| #1 \right|}
\begin{document}
\bibliographystyle{plain}


\newtheorem{assumption}{Assumption}
\newtheorem{definition}{Definition}
\newtheorem{defn}{Definition}
\newtheorem{lemma}{Lemma}
\newtheorem{proposition}{Proposition}
\newtheorem{theorem}{Theorem}
\newtheorem{cor}{Corollary}
\newtheorem{remark}{Remark}
\numberwithin{equation}{section}

\def\Xint#1{\mathchoice
   {\XXint\displaystyle\textstyle{#1}}%
   {\XXint\textstyle\scriptstyle{#1}}%
   {\XXint\scriptstyle\scriptscriptstyle{#1}}%
   {\XXint\scriptscriptstyle\scriptscriptstyle{#1}}%
   \!\int}
\def\XXint#1#2#3{{\setbox0=\hbox{$#1{#2#3}{\int}$}
     \vcenter{\hbox{$#2#3$}}\kern-.5\wd0}}
\def\ddashint{\Xint=}
\def\dashint{\Xint-}
\def\aint{\Xint\diagup}

\newenvironment{proof}{{\bf Proof.}}{\hfill\fbox{}\par\vspace{.2cm}}
\newenvironment{pfthm1}{{\par\noindent\bf
            Proof of Theorem \ref{main-thm1} }}{\hfill\fbox{}\par\vspace{.2cm}}
\newenvironment{pflem2}{{\par\noindent\bf
            Proof of Lemma \ref{main-thm2} }}{\hfill\fbox{}\par\vspace{.2cm}}
\newenvironment{pfthm4}{{\par\noindent\bf
Proof of Theorem \ref{main-thm4}
}}{\hfill\fbox{}\par\vspace{.2cm}}
\newenvironment{pfthm3}{{\par\noindent\bf
Proof of Theorem \ref{main-thm5}}}{\hfill\fbox{}\par\vspace{.2cm}}
\newenvironment{pfthm5}{{\par\noindent\bf
Proof of Theorem \ref{TH-partial} }}{\hfill\fbox{}\par\vspace{.2cm}}
\newenvironment{pfcor5}{{\par\noindent\bf
Proof of Corollary \ref{Local-inequality-2}
}}{\hfill\fbox{}\par\vspace{.2cm}}
\newenvironment{pflemsregular}{{\par\noindent\bf
            Proof of Lemma \ref{sregular}. }}{\hfill\fbox{}\par\vspace{.2cm}}

\newenvironment{spp}{{\par\noindent
            \textbf{The sketch of the proof of Proposition \ref{prop1}.}\quad}}{}

\newenvironment{main-pfs}{{\par\noindent\bf
           Proofs of Theorem \ref{thm:3D-existence}
           and Theorem \ref{thm:3D-L^infinite bound}.
          }}
           {\hfill\fbox{}\par\vspace{.2cm}}

\newenvironment{main-pfs-domains}{{\par\noindent\bf
           Proofs of Theorem \ref{bounded-domain-10}
           and Theorem \ref{bounded-domain-20}.
          }}
           {\hfill\fbox{}\par\vspace{.2cm}}


\title{Existence of global solutions for a Keller-Segel-fluid equations with
 nonlinear diffusion
}

\author{Yun-Sung Chung and Kyungkeun Kang}

\date{}
\maketitle

\bigskip

\noindent{\bf Abstract:}\ We consider a coupled system consisting of
the Navier-Stokes equations and a porous medium type of Keller-Segel
system that model the motion of swimming bacteria living in fluid
and consuming oxygen. We establish the global-in-time existence of
weak solutions for the Cauchy problem of the system in dimension
three. In addition, if the Stokes system, instead Navier-Stokes
system, is considered for the fluid equation, we prove that bounded
weak solutions exist globally in time.\\
\newline{\bf 2000 AMS Subject
Classification}: 35Q30, 35Q35
\newline {\bf Keywords}: incompressible fluid, Keller-Segel model, nonlinear
diffusion


\section{Introduction}

We study a mathematical model describing the dynamics of oxygen,
swimming bacteria, and viscous incompressible fluids in $\R^3$. More
precisely, we consider the Cauchy problem for the coupled
Keller-Segel-Navier-Stokes system in $\R^3\times [0,T)$ with $0<T$
\begin{equation}\label{eq:Chemotaxis}
        \mbox{(KS-NS)}\quad\left\{
            \begin{array}{l}
                \partial_t n+u\cdot\nabla n=\Delta
                n^{1+\alpha}-\nabla\cdot\left(\chi(c)n\nabla
                c\right),
                \\
                \vspace{-3mm}\\
                \partial_t c+u\cdot\nabla c=\Delta c-\kappa(c)n,
                \\
                 \vspace{-3mm}\\
                \partial_t u+\tau(u\cdot\nabla)u+\nabla p=\Delta
                u-n\nabla\phi,
                \qquad {\rm{div}}\, u=0,
            \end{array}
         \right.
\end{equation}
where $n,c,u$ and $p$ are the cell density, oxygen concentration,
velocity field and pressure of the fluid. Here $\alpha>0$ is a
positive constant such that  porous medium type equation of $n$ is
under our consideration.


%

It is known that the above system models the motion of swimming
bacteria, so called {\it Bacillus subtilis}, which live in fluid and
consume oxygen. 
The functions $\chi:\mathbb R\to\mathbb R$ and $\kappa:\mathbb
R\to\mathbb R$ represent the chemotactic sensitivity and consumption
rate of oxygen. The constant $\tau$ in the third equation is $0$ or
$1$. When $\tau=1,$ $u$ becomes the velocity vector of fluid solving
Navier-Stokes equation. If the fluid motion is so slow, one may
assume $\tau=0$ so that $u$ is the velocity vector satisfying Stokes
system.

The above system \eqref{eq:Chemotaxis} was proposed by Tuval
\emph{et al.} in \cite{TCDWKG} for the case $\alpha=0$, which can be
extended to the case $\alpha>0$ when the diffusion of bacteria is
viewed like movement in a porus medium (see e.g. \cite{CKK},
\cite{FLM}, \cite{Liu-Lorz},
\cite{TW_1}, \cite{TW_2}). Throughout this paper, we call the above
system a Keller-Segel-Navier-Stokes equations (KSNS) if $\tau=1$ and
Keller-Segel-Stokes system (KSS) in case that $\tau=0$. Compared to
the Keller-Segel model (KS) of porus medium type:
\begin{equation*}\label{eq:Keller-Segle}
        \mbox{(KS)}\quad\left\{
            \begin{array}{l}
                \partial_t n=\Delta
                n^{1+\alpha}-\nabla\cdot\left(\chi n\nabla
                c\right),
                \\
                \vspace{-3mm}\\
                \tau\partial_t c=\Delta c-c+n,
            \end{array}
         \right.
\end{equation*}
where $\chi$ is a positive constant and $\tau=0$ or $1$, we
emphasize that the chemical substance (oxygen) in
\eqref{eq:Chemotaxis} is consumed, rather than produced by the
bacteria. The existence of global-in-time bounded weak solution for
Cauchy problem to (KS) with arbitrary large initial data is
guaranteed when $\alpha>1/3$ for three dimensional case. Otherwise,
the solution may blow up in finite time unless sufficiently small
initial condition is assumed (see the results \cite{SK} and
\cite{IY}).

The main concern of this paper is to specify the values of $\alpha$
so that the global-in-time weak solutions for (KSNS) and bounded
weak weak solutions for (KSS) are established, under the some
conditions of $\chi$ and $\kappa$. The notions of weak and bounded
weak solutions mentioned above are defined as follows:
\begin{definition}\label{Def:weak}
(Weak solutions)\,\,Let $\alpha>0$ and $0<T<\infty$. A triple
$(n,c,u)$ is said to be a \emph{weak solution} of the system
(\ref{eq:Chemotaxis}) if the followings are satisfied:
\\
$\mathrm{(i)}$ $n$ and $c$ are non-negative functions and $u$ is a
vector function defined in $\mathbb R^3\times (0,T)$ such that
\[
 n(1+|x|+|\log n|)\in L^\infty(0,T;L^1(\mathbb
R^3)),\quad\nabla n^{\frac{1+\alpha}{2}}\in L^2(0,T;L^2(\mathbb
R^3)),
\]
\[
 n\in L^\infty(0,T;L^p(\mathbb
R^3)),\quad\nabla n^{\frac{p+\alpha}{2}}\in L^2(0,T;L^2(\mathbb
R^3)),\quad 1\leq p\leq 1+\alpha
\]

\[
c\in L^\infty(0,T;H^1(\mathbb R^3))\cap L^2(0,T;H^2(\mathbb
R^3)),\quad c\in L^{\infty}(\mathbb R^3\times [0,T)),
\]
\[
u\in L^\infty(0,T;L^2(\mathbb R^3)),\quad \nabla u\in
L^2(0,T;L^2(\mathbb R^3)),
\] $\mathrm{(ii)}$ $(n,c,u)$ satisfies the
equation
  (\ref{eq:Chemotaxis}) in the sense of distributions, namely,
    \begin{eqnarray*}
           \int_0^T\int_{\mathbb R^3}\left(-n\varphi_t+\nabla
            n^{1+\alpha}\cdot\nabla\varphi-nu\cdot\nabla\varphi-n\chi(c)\nabla
            c\cdot\nabla\varphi\right) dxdt=\int_{\mathbb R^3}
            n_0\varphi(\cdot,0)dx,\\
            \int_0^T\int_{\mathbb R^3}\left(-c\varphi_t+\nabla
            c\cdot\nabla\varphi-cu\cdot\nabla\varphi+n\kappa(c)\varphi\right) dxdt=\int_{\mathbb R^3}
            c_0\varphi(\cdot,0)dx,\\
            \int_0^T\int_{\mathbb R^3}\left( -u\cdot\psi_t+\nabla
            u\cdot\nabla\psi+\left(\tau(u\cdot\nabla)u\right)\cdot\psi+n\nabla\phi\cdot\psi
            \right)dxdt=\int_{\mathbb R^3}u_0\cdot\psi(\cdot,0)dx\\
    \end{eqnarray*}
for all test functions $\varphi\in C^\infty_0\left(\mathbb R^3\times
[0,T)\right)$ and $\psi\in C^\infty_0\left(\mathbb R^3\times
[0,T),\mathbb R^3\right)$ with $\nabla\cdot\psi=0.$
\end{definition}

\begin{definition}\label{Def:bdd-weak}
(Bounded weak solutions)\,\,Let $\alpha>0$ and $0<T<\infty.$ A
triple $(n,c,u)$ is said to be a \emph{bounded weak solution} of the
system (\ref{eq:Chemotaxis}) if $(n,c,u)$ is a weak solution in
Definition \ref{Def:weak} and furthermore satisfies the following:
For any $p\in [1,\infty)$ and $q\in [2,\infty)$
\begin{itemize}
\item[$\mathrm{(i)}$] 
$n\in L^\infty((0,T)\times\mathbb R^3)$,\quad
$\displaystyle\nabla
n^{\frac{\alpha+p}{2}}\in L^2(0,T;L^2(\mathbb R^3))$.
\item[$\mathrm{(ii)}$] $c\in L^q(0,T;W^{2,q}(\mathbb R^3))$,
\quad$c_t\in L^q(0,T;L^q(\mathbb R^3))$.
\item[$\mathrm{(iii)}$] $u\in L^q(0,T;W^{2,q}(\mathbb R^3))$,
\quad $u_t\in L^q(0,T;L^q(\mathbb R^3))$.
\end{itemize}
\end{definition}
Before we state our main results, we recall some known results in
case that $\alpha>0$ (compare to e.g. \cite{ckl-cpde}, \cite{CKL},
\cite{DLM}, \cite{Lorz} and \cite{W14} for the case that $\alpha=0$
and references therein).
%
%
%
%
It was proved in \cite{FLM} that weak solutions of (KSS) for bounded
domains exist in case that $\alpha\in (1/2, 1]$ in two dimensions or
in case that $\alpha\in (\frac{-5+\sqrt {217}}{12},1]$ in three
dimensions. It was also shown that if domain is the whole space,
i.e. $\R^2$ or $\R^3$, then weak solutions exists globally when
$\alpha=1$. In \cite{Liu-Lorz}, the exponent $\alpha$ is reduced up
to $1/3$ for the case that spatial domain is $\R^3$, under the
following assumptions on $\chi$ and $\kappa$:
\begin{equation}\label{W-condition}
\chi(c), \kappa(c), \chi'(c), \kappa'(c)\ge
0,\,\,\kappa(0)=0,\,\,\frac{(\chi(c)\kappa(c))'}{\chi(c)}\ge
0,\,\,\bke{\frac{\kappa(c)}{\chi(c)}}^{''}<0.
\end{equation}
For the case of bounded domains in dimension two, bounded weak
solutions of (KSS) are constructed in \cite{TW_1} for any
$\alpha>0$.  In case that fluid equation is the Navier-Stokes
equations, it was proved recently in \cite[Theorem 1.8]{CKK} that
bounded weak solutions of (KS-NS) exist for any $\alpha>0$ in
$\R^2$.

On the other hand, in three dimensional case, \cite{TW_2} considered
a special case of $\chi(c)=1$ and $\kappa(c)=c$ and showed that
global weak solutions of (KSS) exist whenever $\alpha>1/7$ for
bounded domains. For the case of $\R^3$, \cite{CKK} proved that
global bounded weak solutions of (KSS) exist when $\alpha>1/4$ under
the hypothesis either $\chi'(\cdot)\geq \chi_0>0$ or
$\kappa'(\cdot)\geq \kappa_0>0$, where $\chi_0$ and $\kappa_0$ are
positive constants. It was shown, very recently, in \cite{W} that in
case that $\alpha>1/6$, bounded weak solutions are constructed in a
bounded convex domain $\Omega\subset\R^3$ with smooth boundary with
more relaxed conditions on $\chi$ and $\kappa$ in general setting
(see \cite[Theorem 1.1]{W} for the details).

The main purpose of this paper is to establish existence of global
weak and global bounded weak solutions for the Cauchy problem of the
system (KS-NS) in $\R^3$ under rather relaxed conditions of $\chi$
and $\kappa$ compared to \eqref{W-condition}, and smaller value of
$\alpha$ ever known.

Before stating our results, we address some conditions for $\kappa.$
To preserve the non-negativity of the density of bacteria $n(x,t)$
and the oxygen $c(x,t)$ for $0<t<T,$ we need the condition
$\kappa(0)=0$ (see the proof of Lemma \ref{lemma:weak}). The
condition $\kappa(\cdot)\geq 0$ is also essential since the bacteria
is consuming the oxygen. Namely, we assume the following hypothesis:
\begin{equation}\label{CK-Dec25-10}
  \quad\kappa(\cdot)\geq 0\quad\mbox{and}\quad
  \kappa(0)=0.
\end{equation}

We also present two different types of further assumptions on
chemotactic sensitivity $\chi$ and consumption rate $\kappa$
together with the range of $\alpha$. The first one is related to
weak solutions.
\begin{assumption}\label{assume1-chi-kappa}
We suppose that $\chi,\kappa$ satisfy $\chi' \in L^{\infty}_{\rm
loc}$ and $\kappa\in L^\infty_{\rm loc}$ with \eqref{CK-Dec25-10}.
We assume further that one of the
following holds:
\begin{itemize}
\item[(i)] $\alpha>1/6$.
\item[(ii)] $\alpha>0$ and $\chi'(\cdot)\geq \chi_0$  for some constant $\chi_0>0.$
\item[(iii)] $\alpha>0$ and $\kappa'(\cdot)\geq \kappa_0$ for some constant $\kappa_0>0$.
\end{itemize}
\end{assumption}

Another hypothesis is prepared for bounded weak solutions.
\begin{assumption}\label{assume2-chi-kappa}
We suppose that $\chi,\kappa$ satisfy $\chi' \in L^{\infty}_{\rm
loc}$ and $\kappa\in L^\infty_{\rm loc}$ with \eqref{CK-Dec25-10}.
We assume further that one of the following holds:
\begin{itemize}
\item[(i)] $\alpha>1/6$.
\item[(ii)] $\alpha>1/8$ and $\chi'(\cdot)\geq \chi_0$  for some constant $\chi_0>0.$
\item[(iii)] $\alpha>1/8$ and $\kappa'(\cdot)\geq \kappa_0$ for some constant $\kappa_0>0$.
\end{itemize}
\end{assumption}

We are now in a position to state the main results. The first main
result of this paper is the existence of global-in-time weak
solution of the system (KS-NS) with $\tau=1$, which means that the
fluid equations are the Navier-Stokes equations. More precisely, the
first result reads as follows:
\begin{theorem}\label{thm:3D-existence}
Let $\tau=1$ and the Assumption \ref{assume1-chi-kappa} hold.
Suppose that initial data $(n_0,c_0,u_0)$ satisfies
\begin{equation}\label{initial-weak}
n_0(1+|x|+|\log n_0|)\in L^1(\mathbb R^3),\,\, n_0\in
L^{1+\alpha}(\mathbb R^3),\,\, c_0\in L^{\infty}(\mathbb R^3)\cap
H^1(\mathbb R^3),\,\, u_0\in L^2(\mathbb R^3).
\end{equation}
Then, for each $T>0$, there exists a weak solution $(n,c,u)$ for the
system \eqref{eq:Chemotaxis} and it satisfies
\begin{equation*}
\sup_{0\le t\le T}\bke{\int_{\mathbb R^3}n(t)\bke{\abs{\log
n(t)}+2\langle
x\rangle}~dx+\norm{n(t)}_{L^{1+\alpha}}^{1+\alpha}+\norm{\nabla
c(t)}_{L^2}^2 +\norm{u(t)}_{L^2}^2}
\end{equation*}
\begin{equation}\label{JK-Dec10-300}
+\int_0^T\bke{\norm{\nabla
n^\frac{1+\alpha}{2}(t)}_{L^2}^2+\norm{\nabla
n^\frac{1+2\alpha}{2}(t)}_{L^2}^2 +\norm{\Delta
c(t)}_{L^2}^2+\norm{\nabla u(t)}_{L^2}^2}dt<C,
\end{equation}
where $\langle x\rangle=(1+|x|^2)^\frac{1}{2}$ and $C=C(T,
\norm{n_0}_{L^1\cap L^{1+\alpha}}, \norm{n_0\log n_0}_{L^1},
\norm{n_0\langle x\rangle}_{L^1}, \norm{\nabla c_0}_{L^2},
\norm{u_0}_{L^2})$.
\end{theorem}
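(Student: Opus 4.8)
The plan is to construct the solution by a regularization–and–compactness scheme, establishing the bound \eqref{JK-Dec10-300} first at the level of approximate problems. \emph{Step 1 (approximation).} I would replace \eqref{eq:Chemotaxis} by a family of regularized systems indexed by $\varepsilon\in(0,1)$: smooth the initial data (keeping \eqref{initial-weak} uniform in $\varepsilon$), add an artificial diffusion $\varepsilon\Del n$ to the first equation to make it uniformly parabolic, and mollify/truncate the coupling terms $\chi(c)n\na c$, $\kappa(c)n$, $nu$ and $n\na\phi$ (e.g.\ replace $n$ by $n/(1+\varepsilon n)$ and convolve $u,\na c$ with a mollifier). For fixed $\varepsilon$ a standard fixed-point (or Galerkin) argument gives a unique local smooth solution $(n_\varepsilon,c_\varepsilon,u_\varepsilon)$, continued globally by the bounds of Step 2. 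Since $\kappa\ge0$ and $\kappa(0)=0$, the comparison principle gives $n_\varepsilon\ge0$ and $c_\varepsilon\ge0$, and applying the maximum principle to the second equation (whose reaction term $-\kappa(c_\varepsilon)n_\varepsilon\le0$) gives $\norm{c_\varepsilon(t)}_{L^\infty}\le\norm{c_0}_{L^\infty}=:M$ on $[0,T]$; hence $\chi(c_\varepsilon)$, $\kappa(c_\varepsilon)$, $\chi'(c_\varepsilon)$ are bounded by constants depending only on $M$ and the local bounds in Assumption \ref{assume1-chi-kappa}.

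\emph{Step 2 (uniform estimates).} This is the heart of the argument. Testing the first equation with $1$ gives conservation of mass $\norm{n_\varepsilon(t)}_{L^1}=\norm{n_{0,\varepsilon}}_{L^1}$; testing with $\wx$ and using boundedness of $\na\wx,\Del\wx$ controls the moment $\int n_\varepsilon\wx$; testing with $\log n_\varepsilon$ (the transport term dropping by $\na\cdot u_\varepsilon=0$) yields
\[
\ddt\int n_\varepsilon\log n_\varepsilon + c_\al\int\abs{\na n_\varepsilon^{\frac{1+\al}{2}}}^2 = -\int n_\varepsilon\chi'(c_\varepsilon)\abs{\na c_\varepsilon}^2 - \int n_\varepsilon\chi(c_\varepsilon)\Del c_\varepsilon ,
\]
testing with $n_\varepsilon^{p-1}$ for $1\le p\le1+\al$ yields the analogous $L^p$-identity with dissipation $\int\abs{\na n_\varepsilon^{(p+\al)/2}}^2$ and right-hand side $-\frac{p-1}{p}\int n_\varepsilon^p\chi'(c_\varepsilon)\abs{\na c_\varepsilon}^2-\frac{p-1}{p}\int n_\varepsilon^p\chi(c_\varepsilon)\Del c_\varepsilon$, testing the second equation with $-\Del c_\varepsilon$ gives an $H^1$–$H^2$ estimate for $c_\varepsilon$ carrying the term $\int\kappa'(c_\varepsilon)n_\varepsilon\abs{\na c_\varepsilon}^2$, and testing the third with $u_\varepsilon$ gives the Leray estimate $\tfrac12\ddt\norm{u_\varepsilon}_{L^2}^2+\norm{\na u_\varepsilon}_{L^2}^2=-\int n_\varepsilon\na\phi\cdot u_\varepsilon$ (the Navier–Stokes nonlinearity dropping). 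One then absorbs every coupling term by Hölder's inequality, the Gagliardo–Nirenberg–Sobolev inequality in $\R^3$ and $\norm{c_\varepsilon}_{L^\infty}\le M$, using $\norm{n_\varepsilon}_{L^\infty_tL^{1+\al}}$ and $\norm{\na n_\varepsilon^{(1+2\al)/2}}_{L^2_{t,x}}$ as interpolation anchors: $\int n_\varepsilon^p\chi(c_\varepsilon)\Del c_\varepsilon$ is split by Young's inequality into pieces absorbed by $\norm{\Del c_\varepsilon}_{L^2}^2$ and by $\int\abs{\na n_\varepsilon^{(p+\al)/2}}^2$ plus an integrable remainder, and $\int n_\varepsilon\na\phi\cdot u_\varepsilon$ is absorbed by $\norm{\na u_\varepsilon}_{L^2}^2$ after interpolating $n_\varepsilon$ to $L^{6/5}$. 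In case (i), where $\chi'$ is only locally bounded, the $\chi'$-terms are absorbed using $\norm{\chi'}_{L^\infty[0,M]}$ and the exponents in all these interpolations balance precisely when $\al>1/6$; in case (ii) one has $-\int n_\varepsilon\chi'(c_\varepsilon)\abs{\na c_\varepsilon}^2\le-\chi_0\int n_\varepsilon\abs{\na c_\varepsilon}^2$ and in case (iii) $\int\kappa'(c_\varepsilon)n_\varepsilon\abs{\na c_\varepsilon}^2\ge\kappa_0\int n_\varepsilon\abs{\na c_\varepsilon}^2$, and in either case this extra dissipation relaxes the requirement to $\al>0$. Summing the differential inequalities and applying Grönwall gives \eqref{JK-Dec10-300} with $C$ independent of $\varepsilon$.

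\emph{Step 3 (compactness and limit).} From the equations and Step 2, $\pa_t n_\varepsilon$, $\pa_t c_\varepsilon$, $\pa_t u_\varepsilon$ are bounded uniformly in $\varepsilon$ in suitable negative-order Sobolev spaces, so the Aubin–Lions–Simon lemma on balls $B_R\subset\R^3$ together with an exhaustion/diagonal argument (the moment bound $n_\varepsilon\wx\in L^\infty_tL^1$ ruling out loss of mass at spatial infinity) yields a subsequence with $n_\varepsilon\to n$ in $L^q_{\rm loc}(\R^3\times(0,T))$ for suitable $q$, $n_\varepsilon^{(1+\al)/2}\to n^{(1+\al)/2}$ strongly with $\na n_\varepsilon^{(1+\al)/2}\rightharpoonup\na n^{(1+\al)/2}$ weakly in $L^2$, $c_\varepsilon\to c$ strongly in $L^2_{\rm loc}(0,T;H^1)$, $u_\varepsilon\to u$ strongly in $L^2_{\rm loc}$ with $\na u_\varepsilon\rightharpoonup\na u$, and the remaining bounds of \eqref{JK-Dec10-300} surviving by weak-$*$ lower semicontinuity. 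Writing $\na n_\varepsilon^{1+\al}=2\,n_\varepsilon^{(1+\al)/2}\na n_\varepsilon^{(1+\al)/2}$ (a product of a strongly and a weakly $L^2_{\rm loc}$-convergent factor) and treating $n_\varepsilon u_\varepsilon$, $c_\varepsilon u_\varepsilon$, $n_\varepsilon\chi(c_\varepsilon)\na c_\varepsilon$, $n_\varepsilon\kappa(c_\varepsilon)$, $n_\varepsilon\na\phi$ and $(u_\varepsilon\cdot\na)u_\varepsilon$ similarly (the last by classical Navier–Stokes compactness), one passes to the limit in the weak formulation of Definition \ref{Def:weak}, so that $(n,c,u)$ is the desired weak solution.

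The main obstacle is Step 2: closing the coupled hierarchy of entropy, $L^p$, second-order ($c$) and Leray ($u$) estimates with interpolation sharp enough to reach $\al>1/6$, and extracting the gain from the extra dissipation in cases (ii)–(iii), while checking that the artificial diffusion and the truncations do not destroy any of the identities above.
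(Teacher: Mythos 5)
Your proposal follows essentially the same route as the paper: a regularized system with mollified data, non-negativity and the $L^\infty$ bound on $c$ via the maximum principle, and then the coupled entropy / $L^{1+\alpha}$ / weighted-$L^1$ moment / $H^1$-of-$c$ / Leray energy estimates closed by Gagliardo--Nirenberg interpolation against the conserved mass (with the threshold $\alpha>1/6$ in case (i) and the extra dissipation term $\int n|\nabla c|^2$ coming from $\chi'\ge\chi_0$ or $\kappa'\ge\kappa_0$ in cases (ii)--(iii)), followed by Aubin--Lions compactness with the moment bound providing tightness. The only cosmetic difference is your choice of regularization (artificial viscosity plus truncated couplings) versus the paper's non-degenerate shift $\Delta(n+\varrho)^{1+\alpha}$; both are standard and yield the same uniform estimates.
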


If the fluid equations are restricted to be the Stokes equations,
\emph{i.e.} $\tau=0,$ and if the range of $\alpha$ in the hypothesis
in Theorem \ref{thm:3D-existence} is a bit more restrictive, we can
construct bounded weak solutions for the Cauchy problem. More
precisely, our second result reads as follows:

\begin{theorem}\label{thm:3D-L^infinite bound}
Let $\tau=0$ and the Assumption \ref{assume2-chi-kappa} hold.
Suppose that initial data $(n_0,c_0,u_0)$ satisfy
\eqref{initial-weak} and for any $q\in [2, \infty)$
\begin{equation}\label{initial-boundweak}
n_0\in L^{\infty}(\mathbb R^3), \quad c_0\in W^{1,q}(\mathbb R^3),
\quad u_0\in W^{1,q}(\mathbb R^3).
\end{equation}
Then, for each $T>0$, there exists a bounded weak solution $(n,c,u)$
for the system \eqref{eq:Chemotaxis} such that
it satisfies
\begin{equation*}
\norm{n}_{L^{\infty}((0,T)\times \R^3)}+\norm{\nabla
n^{\frac{p+\alpha}{2}}}_{L^2((0,T)\times \R^3)}
+\norm{c}_{L^q(0,T;W^{2,q}(\mathbb R^3))}
\end{equation*}
\begin{equation}\label{JK-Dec10-500}
+\norm{\partial_t c}_{L^q(0,T;L^q(\mathbb R^3))}
+\norm{u}_{L^q(0,T;W^{2,q}(\mathbb R^3))}+\norm{\partial_t
u}_{L^q(0,T;L^q(\mathbb R^3))}<C,
\end{equation}
where $C=C(T, \norm{n_0}_{L^{\infty}}, \norm{c_0}_{W^{1,q}},
\norm{u_0}_{W^{1,q}})$.
\end{theorem}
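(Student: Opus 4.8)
The plan is to construct bounded weak solutions for (KS-NS) with $\tau=0$ via a regularization-and-limit scheme, upgrading the weak solution already provided by Theorem \ref{thm:3D-existence} by a bootstrap argument that exploits the extra linearity of the Stokes system and the stronger initial regularity \eqref{initial-boundweak}. First I would set up an approximating family of problems (e.g. truncating $\chi,\kappa$, mollifying the initial data, and/or adding $\varepsilon\Delta$-type regularization to the porous-medium diffusion), obtain smooth solutions on a maximal interval, and record for them the a priori bounds already established in the proof of Theorem \ref{thm:3D-existence}: the uniform $L^\infty_t L^1_x$ control of $n(1+|x|+|\log n|)$, the uniform $L^\infty_t L^{1+\alpha}_x$ bound on $n$, the space-time bounds on $\nabla n^{(1+\alpha)/2}$ and $\nabla n^{(1+2\alpha)/2}$, and the energy bound $c\in L^\infty_t H^1_x\cap L^2_t H^2_x\cap L^\infty_{x,t}$, $u\in L^\infty_t L^2_x$, $\nabla u\in L^2_t L^2_x$. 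These are the fuel for the bootstrap.

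The core of the argument is an $L^p$-iteration (Moser/Alikakos-type) for the $n$-equation to reach $n\in L^\infty((0,T)\times\R^3)$. Testing the $n$-equation with $n^{p-1}$ gives, after using $\mathrm{div}\,u=0$ to kill the transport term,
\begin{equation*}
\frac{1}{p}\frac{d}{dt}\norm{n}_{L^p}^p + c_{p,\alpha}\norm{\nabla n^{(p+\alpha)/2}}_{L^2}^2 = (p-1)\int_{\R^3} n^{p-1}\chi(c)\nabla c\cdot\nabla n\,dx,
\end{equation*}
and the chemotactic term is controlled by $\norm{\nabla c}_{L^\infty_{x,t}}$ together with interpolation of $\nabla n^{(p+\alpha)/2}$ against lower $L^q$ norms of $n$, absorbing the gradient term on the left. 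The key point is that the $L^\infty$ bound on $\nabla c$ is not yet available from Theorem \ref{thm:3D-existence}, so one interleaves the $n$-iteration with parabolic regularity for the $c$-equation and the Stokes equation: once $n\in L^\infty_t L^p_x$ for $p$ large, $c_t+u\cdot\nabla c-\Delta c=-n\kappa(c)\in L^\infty_t L^p_x$ (using $c\in L^\infty_{x,t}$ and $\kappa\in L^\infty_{\rm loc}$), so maximal $L^p$-regularity plus the $W^{1,q}$ initial data gives $c\in L^p(0,T;W^{2,p})$ and then, by Sobolev embedding in space-time, $\nabla c\in L^\infty((0,T)\times\R^3)$; similarly $\mathrm{div}$-free Stokes regularity upgrades $u$. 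With $\nabla c\in L^\infty$ in hand the $L^p$-iteration closes to yield $n\in L^\infty((0,T)\times\R^3)$ with the stated constant, and then $\nabla n^{(p+\alpha)/2}\in L^2((0,T)\times\R^3)$ follows from the above energy identity, and $c\in L^q(0,T;W^{2,q})$, $c_t\in L^q_tL^q_x$, $u\in L^q(0,T;W^{2,q})$, $u_t\in L^q_tL^q_x$ follow from a final pass of maximal regularity now that $n\in L^\infty$.

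The remaining work is to pass to the limit in the approximating sequence. The uniform bounds above give weak/weak-$*$ limits; strong compactness for $n$ and $c$ (hence for the nonlinear terms $n^{1+\alpha}$, $\chi(c)n\nabla c$, $n\kappa(c)$, $nu$) comes from Aubin–Lions using the equations to bound the time derivatives in a negative-order space, exactly as in the proof of Theorem \ref{thm:3D-existence}; the $L^\infty$ bound on $n$ and the parabolic-regularity bounds are stable under these limits because they are uniform in the approximation parameter. I expect the main obstacle to be the bootstrap bookkeeping at small $\alpha$: the interpolation exponents in the $L^p$-iteration degrade as $\alpha\to 0$, and closing the loop between the $n$-iteration and the gain of $\nabla c\in L^\infty$ is precisely where the threshold $\alpha>1/6$ (or $\alpha>1/8$ under the sign conditions on $\chi'$ or $\kappa'$ in Assumption \ref{assume2-chi-kappa}) enters; the sign conditions are what let one extract an extra good term (a dissipation in $c$ or a favorable lower-order contribution) that compensates the weaker diffusion and pushes the admissible range down to $1/8$. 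Care is also needed that all constants depend only on $T$ and the norms listed in \eqref{JK-Dec10-500}, which requires tracking the dependence through each iteration step.
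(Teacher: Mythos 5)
Your overall architecture (regularize, derive uniform $L^p$ estimates for $n$, use maximal parabolic regularity for $c$ and the Stokes equation for $u$, push $p\to\infty$, then pass to the limit via Aubin--Lions) matches the paper's. But the heart of the argument is left unresolved: you propose to control the chemotactic term in the $L^p$ identity by $\|\nabla c\|_{L^\infty_{x,t}}$, while conceding that this norm only becomes available \emph{after} one already knows $n\in L^\infty_tL^p_x$ for large $p$. As written this is circular, and saying one ``interleaves'' the two steps does not break the circle --- indeed, if $\nabla c\in L^\infty$ were usable from the start, the iteration would close for every $\alpha>0$ and no threshold $\alpha>1/6$ or $\alpha>1/8$ would appear.

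The paper's mechanism for breaking the circularity is specific and is the actual content of Lemma \ref{lemma:bdd-weak}: testing with $n^{p-1}$ and using H\"older plus the Sobolev inequality $\|\nabla c\|_{2p/\alpha}\le C\|\Delta c\|_{6p/(2p+3\alpha)}$ yields
\begin{equation*}
\frac{d}{dt}\|n\|_p^p\le Cp^3\|\Delta c\|^2_{\frac{6p}{2p+3\alpha}}\,\|n\|_p^p+Cp^2\|\Delta c\|^2_{\frac{6p}{2p+3\alpha}},
\end{equation*}
so Gronwall closes at level $p$ as soon as $\Delta c\in L^2(0,T;L^{6p/(2p+3\alpha)})$. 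The decisive point is that the exponent $6p/(2p+3\alpha)$ stays \emph{below $3$ uniformly in $p$}, so this norm of $\Delta c$ can be controlled by maximal regularity of the $c$-equation using only the low-level dissipation $\nabla n^{(1+2\alpha)/2}\in L^2_tL^2_x$ from Lemma \ref{lemma:weak} (plus $u\in L^\infty_tL^6_x$ from the vorticity heat equation, available because $\tau=0$), with time-integrability exponents $2-\delta_p<2$; for $1/8<\alpha\le 1/3$ an intermediate bootstrap to $n\in L^\infty_tL^p_x$ for $p<1+4\alpha$ is needed first. This is where $\alpha>1/3$ versus $\alpha>1/8$ actually enters. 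A smaller inaccuracy: the sign conditions $\chi'\ge\chi_0$ or $\kappa'\ge\kappa_0$ are not what pushes the bootstrap down to $1/8$; they are used in Lemma \ref{lemma:weak} to produce the extra dissipation $\kappa_0\int n|\nabla c|^2$ and hence the \emph{starting} estimates \eqref{eq:from_lemma} for $\alpha\le 1/6$, after which the bootstrap itself requires $\alpha>1/8$ independently of those conditions. You should supply the quantitative interpolation scheme above; without it the proof does not close.
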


\begin{remark}
The results in Theorem \ref{thm:3D-L^infinite bound} can be
rephrased as follows: In case that $\alpha>1/6$, the bounded weak
solutions can be constructed with the assumptions that $\chi,\kappa$
satisfy $\chi' \in L^{\infty}_{\rm loc}$ and $\kappa\in
L^\infty_{\rm loc}$ with $\kappa(\cdot)\geq 0$ and $\kappa(0)=0$
(compare to \cite{W} for bounded domain case). If we assume further
that either $\chi'(\cdot)\geq \chi_0>0$ or $\kappa'(\cdot)\geq
\kappa_0>0$, then bounded weak solutions exist for $\alpha>1/8$,
weaker than $\alpha>1/6$.
\end{remark}


%

Our main concern is construction of weak and bounded weak solutions
for the Cauchy problems of the system \eqref{eq:Chemotaxis} in
$\R^3$ but it can be extended without so much difficulty to bounded
domains with Neumann boundary conditions for $n$ and $c$ and no-slip
boundary conditions for $u$. To be more precise, let $\Omega\subset
\R^3$ be a bounded domain with smooth boundary and we consider the
system \eqref{eq:Chemotaxis} in $\Omega\times[0,T)$ with boundary
conditions
\begin{equation}\label{CK-Dec13-10}
\frac{\partial n}{\partial \nu}=\frac{\partial c}{\partial
\nu}=0,\qquad u=0\qquad \mbox{ on }\,\,\partial\Omega.
\end{equation}
We then obtain following results for the case of bounded domains by
following almost same arguments as in Theorem \ref{thm:3D-existence}
and Theorem \ref{thm:3D-L^infinite bound}.

%
%

\begin{theorem}\label{bounded-domain-10}
Let $\tau=1$ and the Assumption \ref{assume1-chi-kappa} hold with
replacement of $\R^3$ by a bounded domain $\Omega$ with smooth
boundary. Suppose that initial data $(n_0,c_0,u_0)$ satisfies
\begin{equation}
n_0\in L^1(\Omega)\cap L^{1+\alpha}(\Omega),\,\, c_0\in
L^{\infty}(\Omega)\cap H^1(\Omega),\,\, u_0\in L^2(\Omega).
\end{equation}
Then, for each $T>0$, there exists a weak solution $(n,c,u)$ for the
system \eqref{eq:Chemotaxis} and \eqref{CK-Dec13-10}, and it
satisfies
\begin{equation*}
\sup_{0\le t\le T}\bke{\int_{\mathbb R^3}n(t)\abs{\log
n(t)}dx+\norm{n(t)}_{L^{1+\alpha}}^{1+\alpha}+\norm{\nabla
c(t)}_{L^2}^2 +\norm{u(t)}_{L^2}^2}
\end{equation*}
\begin{equation*}
+\int_0^T\bke{\norm{\nabla
n^\frac{1+\alpha}{2}(t)}_{L^2}^2+\norm{\nabla
n^\frac{1+2\alpha}{2}(t)}_{L^2}^2 +\norm{\Delta
c(t)}_{L^2}^2+\norm{\nabla u(t)}_{L^2}^2}dt<C,
\end{equation*}
where $C=C(T, \norm{n_0}_{L^1\cap L^{1+\alpha}}, \norm{n_0\log
n_0}_{L^1}, \norm{\nabla c_0}_{L^2}, \norm{u_0}_{L^2})$.
\end{theorem}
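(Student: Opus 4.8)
The plan is to mirror the $\R^3$ argument of Theorem~\ref{thm:3D-existence} verbatim, replacing global integration by parts with boundary-aware integration by parts and using that on a bounded domain with Neumann conditions for $n,c$ and no-slip for $u$ all the boundary terms vanish. First I would set up a regularized/approximate system: replace the porous-medium nonlinearity $\Delta n^{1+\alpha}$ by $\Delta(n+\es)^{1+\alpha}$ (or add $\es\Delta n$), truncate the drift term $\nabla\cdot(\chi(c)n\nabla c)$ and the coupling $n\nabla\phi$ at level $\la$, and solve the resulting nondegenerate parabolic--Navier--Stokes system on $\Omega\times(0,T)$ with the boundary conditions \eqref{CK-Dec13-10} by a fixed-point/Galerkin scheme, obtaining smooth approximate solutions $(n_\es,c_\es,u_\es)$ that are nonnegative because $\kappa(0)=0,\ \kappa\ge 0$ (this is exactly the role of \eqref{CK-Dec25-10}, as in Lemma~\ref{lemma:weak}).

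Second, I would derive the a priori estimates underlying \eqref{JK-Dec10-300} uniformly in the regularization. The key ones are: (a) mass conservation $\int_\Omega n\,dx=\int_\Omega n_0\,dx$, which on a bounded domain is cleaner than on $\R^3$ since no weight is needed; (b) the entropy estimate: multiply the $n$-equation by $\log n$ (or $1+\log n$) and integrate, using $\na n\cdot\nu=0$ on $\partial\Omega$ to kill the boundary term from $\int_\Omega \Delta n^{1+\alpha}\log n$, producing the dissipation $\|\na n^{\frac{1+\alpha}{2}}\|_{L^2}^2$; (c) the $L^{1+\alpha}$ estimate: multiply by $n^{\alpha}$ to get $\|\na n^{\frac{1+2\alpha}{2}}\|_{L^2}^2$; (d) the $c$-energy estimate: test the $c$-equation with $-\Delta c$ to control $\|\na c\|_{L^2}^2$ and $\|\Delta c\|_{L^2}^2$, where one uses $\na c\cdot\nu=0$ to integrate by parts and, crucially, the $L^\infty$ bound $0\le c\le \|c_0\|_\infty$ coming from the maximum principle (valid on $\Omega$ with Neumann conditions) so that $\chi(c),\kappa(c)$ and their derivatives are bounded via $\chi'\in L^\infty_{\rm loc},\ \kappa\in L^\infty_{\rm loc}$; (e) the fluid energy estimate: test the Navier--Stokes equation with $u$, where no-slip makes the pressure and viscous boundary terms vanish and the convective term drops by incompressibility, giving $\|u\|_{L^2}^2$ and $\|\na u\|_{L^2}^2$, with the forcing $n\na\phi$ absorbed using the $L^{1+\alpha}$ control of $n$ together with Sobolev embedding on $\Omega$. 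Coupling all these via a Gronwall argument yields the bound \eqref{JK-Dec10-300} with $C$ depending only on $T$ and the stated initial norms (on a bounded domain the weight $\langle x\rangle$ is unnecessary, which is why it is dropped from the statement).

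Third, I would pass to the limit $\es\to0$ (and $\la\to\infty$): the uniform bounds give weak and weak-$*$ compactness for $n_\es$ in the energy spaces, $c_\es$ in $L^2(0,T;H^2)\cap L^\infty(0,T;H^1)$, $u_\es$ in $L^2(0,T;H^1)\cap L^\infty(0,T;L^2)$; estimating the time derivatives $\partial_t n_\es,\partial_t c_\es,\partial_t u_\es$ in suitable negative-order spaces and invoking Aubin--Lions on the bounded domain $\Omega$ gives strong $L^p$ convergence of $n_\es,c_\es,u_\es$, which is enough to identify the nonlinear limits $n^{1+\alpha}$, $n u$, $c u$, $n\chi(c)\na c$, $n\kappa(c)$, $n\na\phi$ and $(u\cdot\na)u$ in the distributional formulation; the test-function class must be adjusted to $C^\infty(\bar\Omega\times[0,T))$ respecting \eqref{CK-Dec13-10} rather than $C_0^\infty$.

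The main obstacle I expect is exactly the same one as in Theorem~\ref{thm:3D-existence}: controlling the cross term $\int_\Omega n\,\chi(c)\,\na c\cdot\na(\cdot)$ in the entropy/$L^{1+\alpha}$ estimates for the full range $\alpha>1/6$ (or $\alpha>0$ under the sign conditions (ii),(iii) on $\chi',\kappa'$ in Assumption~\ref{assume1-chi-kappa}). This requires the delicate interpolation between $\|\na n^{\frac{1+\alpha}{2}}\|_{L^2}$, $\|\na n^{\frac{1+2\alpha}{2}}\|_{L^2}$ and the $L^\infty$ bound on $c$, absorbing the bad term into the dissipation by Young's inequality only when $\alpha$ is large enough — and it is precisely here that the threshold $1/6$ enters. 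On a bounded domain this interpolation is actually slightly more forgiving because Poincaré-type inequalities and the compact embedding are available, so once the $\R^3$ computation is in hand the bounded-domain version follows by the same chain of inequalities; one only needs to check that every integration by parts that was done over $\R^3$ (with decay at infinity) is replaced by one over $\Omega$ whose boundary contribution vanishes under \eqref{CK-Dec13-10}. Hence the proof reduces to a careful bookkeeping of boundary terms plus a verbatim reuse of the energy machinery of Theorem~\ref{thm:3D-existence}.
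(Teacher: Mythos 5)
Your proposal is correct and takes essentially the same route as the paper: rerun the energy/entropy machinery of Lemma \ref{lemma:weak} on $\Omega$, verify that every boundary term vanishes under \eqref{CK-Dec13-10}, and drop the weight $\langle x\rangle$ because on a bounded domain the negative part of $\int_\Omega n\log n\,dx$ is already controlled by the conserved mass $\norm{n}_{L^1(\Omega)}$. The one point you gloss over (in claiming the bounded domain is ``more forgiving'') is that the Gagliardo--Nirenberg interpolations acquire an extra additive lower-order term, e.g. $\norm{n}_{L^2(\Omega)}^2\le C\norm{n}_{L^1(\Omega)}^{\frac{1+6\alpha}{2+6\alpha}}\norm{\nabla n^{\frac{1+2\alpha}{2}}}_{L^2(\Omega)}^{\frac{6}{2+6\alpha}}+C\norm{n}_{L^1(\Omega)}^2$, which is harmless here since $\norm{n}_{L^1}$ is conserved.
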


\begin{theorem}\label{bounded-domain-20}
Let $\tau=0$ and the Assumption \ref{assume2-chi-kappa} hold with
replacement of $\R^3$ by a bounded domain $\Omega$ with smooth
boundary. Suppose that initial data $(n_0,c_0,u_0)$ satisfy
\eqref{initial-weak} and for any $q\in [2, \infty)$
\begin{equation*}
n_0\in L^{\infty}(\Omega), \quad c_0\in W^{1,q}(\Omega), \quad
u_0\in W^{1,q}(\Omega).
\end{equation*}
Then, for each $T>0$, there exists a bounded weak solution $(n,c,u)$
for the system \eqref{eq:Chemotaxis}  and \eqref{CK-Dec13-10} such
that
it
satisfies
\begin{equation*}
\norm{n}_{L^{\infty}((0,T)\times \Omega)}+\norm{\nabla
n^{\frac{p+\alpha}{2}}}_{L^2((0,T)\times \Omega)}
+\norm{c}_{L^q(0,T;W^{2,q}(\Omega))}
\end{equation*}
\begin{equation*}
+\norm{\partial_t c}_{L^q(0,T;L^q(\Omega))}
+\norm{u}_{L^q(0,T;W^{2,q}(\Omega))}+\norm{\partial_t
u}_{L^q(0,T;L^q(\Omega))}<C,
\end{equation*}
where $C=C(T, \norm{n_0}_{L^{\infty}}, \norm{c_0}_{W^{1,q}},
\norm{u_0}_{W^{1,q}})$.
\end{theorem}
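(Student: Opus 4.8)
The plan is to mirror the Cauchy-problem argument used for Theorem \ref{thm:3D-L^infinite bound}, carrying out the construction on the bounded domain $\Omega$ with the boundary conditions \eqref{CK-Dec13-10}, and then to check that every estimate used in the whole-space proof has a counterpart on $\Omega$. First I would set up a regularized system: replace $\Delta n^{1+\alpha}$ by $\nabla\cdot\bke{(n+\es)^\alpha \nabla n}$ (or mollify the porous-medium nonlinearity in the standard way), truncate the chemotactic term, and solve the resulting non-degenerate parabolic system for $(n_\es,c_\es,u_\es)$ by a fixed-point/Galerkin scheme, using that $\Omega$ is bounded so that Rellich compactness and the Neumann/no-slip heat semigroups are available with all the usual $L^q$-$L^q$ and $L^q$-$W^{2,q}$ maximal regularity bounds. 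Since $\tau=0$ the fluid equation is the Stokes system, which on a smooth bounded $\Omega$ enjoys the same maximal regularity as on $\R^3$; this is what makes the $L^q(0,T;W^{2,q})$ and $L^q(0,T;L^q)$ bounds for $u$ and $u_t$ go through verbatim.

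Next I would reproduce the a priori estimates in the same order as for $\R^3$. Step one: the basic entropy/energy estimates — mass conservation $\int_\Omega n = \int_\Omega n_0$, the $L\log L$ bound for $n$, the $H^1$ bound for $c$ (together with $\|c\|_{L^\infty}\le \|c_0\|_{L^\infty}$ from $\kappa\ge0$, $\kappa(0)=0$ and the maximum principle), and the $L^2$ energy bound for $u$; here the $|x|$-moment term of Theorem \ref{thm:3D-existence} is simply dropped because $\Omega$ is bounded. These give control of $\nabla n^{(1+\alpha)/2}$ in $L^2_{t,x}$. Step two: the $L^p$ estimates for $n$ for $1\le p\le 1+\alpha$, yielding $\nabla n^{(p+\alpha)/2}\in L^2_{t,x}$, exactly as stated in the conclusion. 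Step three: the bootstrap to $L^\infty$ — using Assumption \ref{assume2-chi-kappa} (either $\alpha>1/6$ unconditionally, or $\alpha>1/8$ together with the one-sided derivative lower bound on $\chi$ or $\kappa$) to run the Moser-type iteration, or alternatively a De Giorgi argument, on the $n$-equation; the structural hypothesis on $\chi'$ or $\kappa'$ is what lets one absorb the cross term in the $L^p$ estimate and iterate $p\to\infty$. Step four: once $n\in L^\infty$, feed $n$ as a forcing term into the $c$-equation and into the Stokes system and invoke parabolic and Stokes $L^q$-maximal regularity on $\Omega$ to obtain $c\in L^q(0,T;W^{2,q})$, $c_t\in L^q(0,T;L^q)$, $u\in L^q(0,T;W^{2,q})$, $u_t\in L^q(0,T;L^q)$ for every $q\in[2,\infty)$, using the assumed regularity \eqref{initial-boundweak} of the data.

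Finally I would pass to the limit $\es\to0$: the a priori bounds above are uniform in $\es$, so by Banach–Alaoglu and the Aubin–Lions lemma (Rellich compactness on bounded $\Omega$ makes this cleaner than on $\R^3$) one extracts a subsequence with $n_\es\to n$ strongly in $L^2_{t,x}$, $c_\es\to c$ and $u_\es\to u$ strongly in suitable spaces, and identifies the limit as a bounded weak solution satisfying \eqref{CK-Dec13-10} in the weak (Neumann/no-slip) sense; the nonlinear terms $u\cdot\nabla n$, $\chi(c)n\nabla c$, $\kappa(c)n$ converge by the strong convergence of $n$ and $c$ together with the weak convergence of the gradients, and $n_\es^{1+\alpha}\to n^{1+\alpha}$ because $n_\es$ is bounded and converges a.e. The one step I expect to be the real obstacle is the $L^\infty$ bootstrap (step three): making the Moser iteration close for $\alpha$ as small as $1/6$ (or $1/8$ under the extra hypothesis) requires carefully tracking how the chemotactic cross term is controlled by $\|\nabla c\|$, which in turn is controlled through the $c$-equation by $\|n\|_{L^p}$ — so there is a coupling that must be resolved by choosing the iteration exponents and interpolation inequalities (Gagliardo–Nirenberg on $\Omega$) in just the right order; everything else is a routine transcription of the $\R^3$ argument with the boundary terms vanishing by the Neumann and no-slip conditions.
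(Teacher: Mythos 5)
Your overall strategy is the same as the paper's: regularize, rerun the whole-space a priori estimates on $\Omega$ (dropping the $\langle x\rangle$-moment, which you correctly note is unnecessary on a bounded domain), bootstrap $n$ to $L^\infty$, feed $n$ into parabolic and Stokes maximal regularity, and pass to the limit by Aubin--Lions. The paper in fact gives only this same outline and spells out the handful of places where the transcription is not literal.

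The one point you gloss over is precisely the point the paper singles out as the ``major modification.'' In the whole-space proof of Lemma \ref{lemma:bdd-weak}, the bound $u\in L^\infty(0,T;L^6)$ --- which is indispensable for controlling $\norm{u\cdot\nabla c}_{L^r}$ in the maximal-regularity bootstrap for $c$, and hence for closing the $L^p$ iteration for $n$ --- is obtained from the vorticity equation $\omega_t-\Delta\omega=-\nabla\times(n\nabla\phi)$ via an $L^2$ energy estimate for $\omega$. On a bounded domain with the no-slip condition $u=0$ on $\partial\Omega$, the vorticity carries no usable boundary condition and this argument fails, so asserting that the Stokes estimates ``go through verbatim'' is not accurate. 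The paper replaces it by testing the Stokes equation with $-\Delta u$ and invoking the Giga--Sohr $L^p$ maximal regularity for the Stokes system to control $\nabla p$ in $L^2(Q)$ by $\norm{n}_{L^2(Q)}$; together these give $\nabla u\in L^\infty(0,T;L^2(\Omega))$ and then $u\in L^\infty(0,T;L^6(\Omega))$ by Sobolev embedding. Your appeal to Stokes maximal regularity on $\Omega$ contains the right ingredient, but you never connect it to this specific bound, which is where the argument would otherwise break. Two smaller remarks: the Gagliardo--Nirenberg inequalities must be amended on $\Omega$ by an additive lower-order term (e.g.\ $\norm{n}_{L^2(\Omega)}^2\le C\norm{n}_{L^1}^{\frac{1+6\alpha}{2+6\alpha}}\norm{\nabla n^{\frac{1+2\alpha}{2}}}_{L^2}^{\frac{6}{2+6\alpha}}+C\norm{n}_{L^1}^2$), and the hypothesis $\chi'\ge\chi_0$ or $\kappa'\ge\kappa_0$ is used in the paper at the level of the basic entropy estimate (to obtain $n\in L^\infty_tL^{1+\alpha}$ and $\nabla n^{(1+2\alpha)/2}\in L^2_{t,x}$ when $\alpha\le 1/6$), not inside the $L^\infty$ iteration itself as you suggest.
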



Since verifications of Theorem \ref{bounded-domain-10} and Theorem
\ref{bounded-domain-20} are similar to those of Theorem
\ref{thm:3D-existence} and Theorem \ref{thm:3D-L^infinite bound},
only differences are indicated in the proofs in section 4.

\begin{remark}
Theorem \ref{thm:3D-existence} and Theorem \ref{thm:3D-L^infinite
bound} are improvements over the results in \cite[Theorem 1.5 and
Theorem 1.7]{CKK}. We also remark that Theorem
\ref{bounded-domain-20} covers the result of \cite{TW_2}, since the
case $\chi=1$ and $\kappa(c)=c$ is the special case of (iii) in the
Assumption \ref{assume2-chi-kappa} and besides, $\alpha>1/8$ is
wider than $\alpha>1/7$ in \cite{TW_2}.
\end{remark}

This paper is organized as follows. In section 2 and section 3 a
priori estimates of weak and bounded weak solutions are established.
In section 4 we present the proofs of main results.

\section{Weak solutions}

Throughout this section, we study the solutions of the approximate
problem of \eqref{eq:Chemotaxis} given by
\begin{equation}\label{eq:Chemotaxis-approximate}
        \left\{
            \begin{array}{l}
                \partial_t n_\varrho+u_\varrho\cdot\nabla n_\varrho=\Delta
                (n_\varrho+\varrho)^{1+\alpha}-\nabla\cdot\left(\chi(c_\varrho)n_\varrho\nabla
                c_\varrho\right),
                \\
                \vspace{-3mm}\\
                \partial_t c_\varrho+u_\varrho\cdot\nabla c_\varrho=\Delta c_\varrho-\kappa(c_\varrho)n_\varrho,
                \\
                 \vspace{-3mm}\\
                \partial_t u_\varrho+\tau(u_\varrho\cdot\nabla)u_\varrho+\nabla p_\varrho=\Delta
                u_\varrho-n_\varrho\nabla\phi,
                \qquad {\rm{div}}\, u_\varrho=0,
            \end{array}
         \right.
\end{equation}
in $\mathbb R^3\times (0,T)$ with smooth initial data
$(n_{0\varrho}, c_{0\varrho}, u_{0\varrho})$ given by
    \begin{align*}
        n_{0\varrho}=\psi_\varrho\ast n_0,\quad c_{0\varrho}=\psi_\varrho\ast c_0
        \quad\mathrm{and}\quad u_{0\varrho}=\psi_\varrho\ast u_0
    \end{align*}
where $\phi_\varrho$ denotes the usual mollifier with
$\varrho\in(0,1).$

It is known that, due to the standard theory of existence and
regularity as done in \cite{FLM} and \cite{TW_2}, there exists a
classical solution of the equation \eqref{eq:Chemotaxis-approximate}
locally in time for each $\varrho\in(0,1)$.
The main objective of this section is to derive appropriate uniform
estimates, independent of $\varrho$, of the solutions. The estimates
are crucially used in Section $3$ to extend the above local solution
to any given time interval (0,T) and to construct the weak solutions
and the bounded weak solutions of the equation
\eqref{eq:Chemotaxis}.

We start with some notations. For $1\leq q\leq \infty$, we denote by
$W^{k,q}(\mathbb R^3)$ the usual Sobolev spaces, namely
$W^{k,q}(\mathbb R^3)=\{f\in L^q(\mathbb R^3): D^{\alpha}f\in
L^q(\mathbb R^3), 0\leq \abs{\alpha}\leq k\}$. The set of $q-$th
power Lebesgue integrable functions on $\mathbb R^3$ is denoted by
$L^q(\mathbb R^3)$. In what follows, for simplicity, $\|\cdot\|_p$
denotes $\|\cdot\|_{L^p(\mathbb R^3)}$ for $1\leq p\leq \infty$,
unless there is any confusion to be expected. We also denote by
$W^{-k,q'}(\mathbb R^3)$ dual space of $W^{k,q}_0(\mathbb R^3)$,
where $q$ and $q'$ are H\"older conjugates. The letters
$C=C(*,...,*)$ and $C'=C'(*,...,*)$ are used to represent generic
constants, depending on $*,...,*$, which may change from line to
line.

We first recall that maximal regularity estimate of the
inhomogeneous heat equation, which we use later. Let $0<T<\infty$
and $1<p<\infty$, and we consider
\[
v_t-\Delta v=f \quad \mbox{ in }\,\R^3\times (0,T)
\]
with initial datum $v(x,0)=v_0(x)$ with $v_0\in W^{1,p}(\R^3)$. Then
following $L^p$ estimate is well-known:
\begin{equation}\label{heat-maximal}
\norm{v_t}_{L^p((0,T)\times\R^3)}+\norm{v}_{L^p((0,T);W^{2,p}(\R^3))}\leq
C\bke{\norm{f}_{L^p((0,T)\times\R^3)}+\norm{v_0}_{W^{1,p}(\R^3)}}.
\end{equation}

In the following lemma, we give an estimate of solutions of
(\ref{eq:Chemotaxis-approximate}) under the Assumption
\ref{assume1-chi-kappa}. The estimate is used in Section 3 to
construct the weak solution of the equation \eqref{eq:Chemotaxis}.
For the sake of simplicity, throughout Section $2$, we denote
$n_\varrho,~c_\varrho$ and $u_\varrho$ by $n,c$ and $u.$ Also, we
define the functionals $E_{M}(t)$ and $D(t)$ as follows:
    \begin{equation}\label{eq:E(t)}
        E_{M}(t):=\int_{\mathbb R^3}n(t)\bke{\abs{\log n(t)}+2\langle x\rangle}~dx+\norm{n(t)}_{1+\alpha}^{1+\alpha}+\norm{\nabla c(t)}_2^2
        +\frac{M+2}{2}\norm{u(t)}_2^2
    \end{equation}
and
    \begin{equation}\label{eq:D(t)}
        D(t):=\norm{\nabla n^\frac{1+\alpha}{2}(t)}_2^2+\norm{\nabla
        n^\frac{1+2\alpha}{2}(t)}_2^2
        +\norm{\Delta c(t)}_2^2+\norm{\nabla u(t)}_2^2,
    \end{equation}
where $\langle x\rangle=(1+|x|^2)^\frac{1}{2}$ and  $M$ is a
positive constant, which will be specified later.
\begin{lemma}\label{lemma:weak}
Let $T>0$. Suppose that $(n,c,u)$ is a classical solution for the
system (\ref{eq:Chemotaxis-approximate})$_\varrho,$
$\varrho\in(0,1)$ with the smooth initial datum $(n_{0\varrho},
c_{0\varrho}, u_{0\varrho})$ satisfies the initial condition
(\ref{initial-weak}) independently of $\rho$. Assume further that
$\chi,\kappa$ and $\alpha$ satisfy Assumption
\ref{assume1-chi-kappa}. Then, there exists $C>0$ and $M>0$, which
are independent of $\rho$, such that for any $0<t\le T$, $E_{M}(t)$
and $D(t)$ defined in \eqref{eq:E(t)}-\eqref{eq:D(t)} satisfy
\begin{equation}\label{eq:E'+D<CE}
\sup_{0\leq \tau\leq t}E_{M}(\tau)+\int_0^t D(\tau) d\tau< C,
\end{equation}
where $C=C(T, \norm{n_0}_{L^1}, \norm{n_0\log n_0}_{L^1},
\norm{n_0\langle x\rangle}_{L^1}, \norm{n_0}_{L^{1+\alpha}},
\norm{\nabla c_0}_{L^2}, \norm{u_0}_{L^2})$.

\end{lemma}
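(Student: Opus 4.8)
The plan is to derive a chain of differential inequalities for the constituent pieces of $E_M(t)$ and to combine them into a Gronwall-type estimate controlling $E_M$ and $\int_0^t D$. I would organize the proof as a sequence of a priori estimates, all performed on the classical solutions of the approximate system, keeping careful track of $\varrho$-independence.

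First, the elementary conserved/controlled quantities: testing the $n$-equation with $1$ gives mass conservation $\|n(t)\|_1 = \|n_0\|_1$; testing the $c$-equation with $1$ and using $\kappa\ge 0$ gives $\|c(t)\|_1\le\|c_0\|_1$, and the maximum principle (using $\kappa\ge 0$, hence $\partial_t c + u\cdot\nabla c - \Delta c \le 0$) gives $0\le c\le \|c_0\|_\infty$, so $\chi(c),\kappa(c),\chi'(c)$ are all bounded on the relevant range by the $L^\infty_{\rm loc}$ hypotheses. Next the basic energy estimate for $u$: testing the (Navier–)Stokes equation with $u$, the convective term drops by $\mathrm{div}\,u=0$, and one gets $\frac12\ddt\|u\|_2^2 + \|\nabla u\|_2^2 = -\int n\nabla\phi\cdot u \le \|\nabla\phi\|_\infty\|n\|_1\|u\|_2$, controllable by mass conservation. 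Then the spatial-moment bound: testing the $n$-equation with $\wx$ controls $\int n\wx\,dx$ on $[0,T]$, using the diffusion and drift terms together with $\|n\|_1$, $\|\nabla c\|_2$ and $\|u\|_2$.

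The heart of the matter is the coupled estimate for the entropy $\int n\log n$, the $L^{1+\alpha}$ norm $\|n\|_{1+\alpha}^{1+\alpha}$, and $\|\nabla c\|_2^2$. Testing the $n$-equation with $1+\log n$ produces the good dissipation $\int \frac{n^{1+\alpha}}{(n+\varrho)^2}|\nabla(n+\varrho)|^2\,dx \gtrsim \|\nabla n^{(1+\alpha)/2}\|_2^2$ (after passing through the algebra; here is where one needs $\alpha>0$) and a cross term $\int \chi(c)n\nabla c\cdot\nabla\log n = \int\chi(c)\nabla c\cdot\nabla n$, to be integrated by parts and absorbed. Testing the $n$-equation with $n^\alpha$ (or $(n+\varrho)^\alpha$) gives $\frac{1}{1+\alpha}\ddt\|n\|_{1+\alpha}^{1+\alpha} + c_\alpha\|\nabla n^{(1+2\alpha)/2}\|_2^2 \le \int \chi(c)n\nabla c\cdot\nabla n^\alpha$, whose right side is bounded via Young's and Gagliardo–Nirenberg inequalities by a small multiple of $\|\nabla n^{(1+2\alpha)/2}\|_2^2 + \|\Delta c\|_2^2$ plus lower-order terms. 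Simultaneously, testing the $c$-equation with $-\Delta c$ yields $\frac12\ddt\|\nabla c\|_2^2 + \|\Delta c\|_2^2 = \int (u\cdot\nabla c)\Delta c + \int \kappa(c)n\,\Delta c$, and the two terms are controlled by $\varepsilon\|\Delta c\|_2^2$ plus $\|\nabla u\|_2^2\|\nabla c\|_2^2$-type quantities (via $\|u\nabla c\|_2 \lesssim \|u\|_{L^6}\|\nabla c\|_{L^3}$ and Gagliardo–Nirenberg) and $\|n\|_{1+\alpha}$-type quantities. It is precisely in absorbing these chemotactic and transport cross-terms — getting all the $\|\nabla n^{(1+2\alpha)/2}\|_2^2$, $\|\Delta c\|_2^2$, $\|\nabla u\|_2^2$ on the left with good signs and only dissipation-controlled remainders on the right — that the sharp threshold $\alpha>1/6$ (or the relaxed $\alpha>0$ under hypotheses (ii)/(iii), where an extra good term $\int\chi'(c)n|\nabla c|^2$ or $\int\kappa'(c)n|\nabla c|^2$ becomes available) must be used. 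This absorption argument, balancing the Gagliardo–Nirenberg exponents against the exponent $\alpha$ in the porous-medium dissipation, is the main obstacle.

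Finally, I would add up the inequalities with suitably chosen weights — this is where the constant $M$ in front of $\|u\|_2^2$ in $E_M$ gets fixed, large enough that the $\|\nabla u\|_2^2$ dissipation dominates the $\|\nabla c\|_2^2$-multiplied transport remainder — to arrive at $\ddt E_M(t) + c_0 D(t) \le A(t)E_M(t) + B(t)$ with $A,B\in L^1(0,T)$ depending only on the stated data norms and on $T$ (through the $c$-bound and mass conservation). Gronwall's inequality then yields $\sup_{[0,t]}E_M + \int_0^t D \le C$ with $C$ of the claimed form, uniformly in $\varrho$. Throughout, the only place $\varrho$ enters is through $(n+\varrho)$ in the diffusion; since all estimates are uniform in $\varrho\in(0,1)$ and the initial data $(n_{0\varrho},c_{0\varrho},u_{0\varrho})$ satisfy \eqref{initial-weak} with bounds independent of $\varrho$, the final constant is $\varrho$-independent as asserted.
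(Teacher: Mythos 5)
Your proposal follows essentially the same route as the paper's proof: entropy estimate from testing the $n$-equation with $1+\log n$, the $L^{1+\alpha}$ estimate from testing with $n^\alpha$, the $-\Delta c$ test for the oxygen equation, the fluid energy identity and the weighted moment $\int n\langle x\rangle$, absorption of the resulting $\|\nabla u\|_2^2$ remainder by adding a large multiple $M$ of the energy identity (which is exactly how $M$ enters $E_M$), the extra signed term $\kappa_0\int n|\nabla c|^2$ (resp.\ $\chi_0$) in cases (ii)/(iii), and a final Gronwall argument; the paper merely carries this out by splitting case (i) into the sub-ranges $1/6<\alpha\le 1/3$, $1/3<\alpha\le 1$, $\alpha>1$, and reduces the cross terms $\int n^{1-\alpha}|\nabla c|^2$ by writing $n^{1-\alpha}\nabla c\cdot\nabla c$ and integrating by parts before applying Young and Gagliardo--Nirenberg.

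One step as you wrote it would fail: the bound
\[
-\int_{\mathbb R^3} n\,\nabla\phi\cdot u\,dx\ \le\ \|\nabla\phi\|_\infty\,\|n\|_1\,\|u\|_2
\]
is not a valid H\"older pairing ($\|n\|_1\|u\|_2$ does not dominate $\int n|u|$), so this forcing term is \emph{not} controlled by mass conservation alone. The correct treatment, as in the paper, is $\int n|u|\,dx\le \|n\|_{6/5}\|u\|_6\le C\|n\|_{6/5}\|\nabla u\|_2$, followed by Gagliardo--Nirenberg interpolation of $\|n\|_{6/5}$ between $\|n\|_1$ and $\|\nabla n^{(1+2\alpha)/2}\|_2$ and Young's inequality, so that the term is absorbed into the dissipation $\epsilon\|\nabla n^{(1+2\alpha)/2}\|_2^2+\epsilon\|\nabla u\|_2^2+C(\epsilon)$ --- i.e.\ it requires the porous-medium dissipation in exactly the same way as your other cross terms. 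With that correction the argument closes as you describe.
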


\begin{proof}
We observe, by integrating both sides of
\eqref{eq:Chemotaxis-approximate}$_1$, that the total mass of $n$ is
preserved, \emph{i.e.} $\norm{n(t)}_1\equiv\norm{n_0}_1.$ It is also
obvious by applying maximal principle to
\eqref{eq:Chemotaxis-approximate}$_2$ that
$\norm{c}_{L^\infty(\mathbb R^3_T)}\leq\norm{c_0}_\infty,$ where
$\mathbb R^3_T:=\mathbb R^3\times [0,T).$ We also note here that $n$
and $c$ preserves nonnegativity of the initial data by the
condition $\kappa(0)=0$ and the parabolic comparison principle.\\

$\bullet$\,\textbf{Case {\bf (i)}\,\, in the Assumption
\ref{assume1-chi-kappa}}\,\, Here we consider the case that
$\alpha>1/6$. We separate the range of $\alpha$ into three parts,
that is, $1/6<\alpha\leq 1/3,$ $1/3<\alpha\leq 1$ and $\alpha>1$,
and treat each case individually. We start with the case that
$1/6<\alpha\leq 1/3$.\\
\\
(Case $1/6<\alpha\leq 1/3$):\,\, Multiplying
$(\ref{eq:Chemotaxis-approximate})_1$ with $(1+\log n)$ and
integrating it parts,
    $$
        \frac{d}{dt}\int_{\mathbb R^3} n\log n
        dx+\int_{\mathbb R^3}\nabla \log n\cdot \nabla(n+\varrho)^{1+\alpha}dx=\int_{\mathbb R^3}\nabla
        n\cdot\left(\chi(c)\nabla c\right)dx.
    $$
Since $\nabla\log n\cdot \nabla n=4\abs{\nabla n^{1/2}}^2,$ we have
    \begin{align*}
        &\int_{\mathbb R^3}\nabla \log n\cdot
        \nabla(n+\varrho)^{1+\alpha}dx=\int_{\mathbb R^3}\nabla\log n\cdot (1+\alpha)(n+\varrho)^\alpha\nabla n~ dx\\
        &\qquad\ge\int_{\mathbb R^3}\nabla \log n\cdot (1+\alpha)n^\alpha\nabla
        n~dx=\frac{4}{1+\alpha}\norm{\nabla n^{\frac{1+\alpha}{2}}}_2^2.
    \end{align*}
Taking into account $1/6<\alpha\leq 1/3,$ which admits
$(1-\alpha)/2>0,$ we obtain
    \begin{align*}
        &\frac{d}{dt}\int_{\mathbb R^3} n\log n
        dx+\frac{4}{1+\alpha}\left\|\nabla
        n^{\frac{1+\alpha}{2}}\right\|_2^2\leq\int_{\mathbb R^3}\nabla
        n\cdot\left(\chi(c)\nabla c\right)~dx\\
        &\qquad\leq \frac{2\overline\chi}{1+\alpha}\int_{\mathbb
        R^3}\abs{\nabla n^\frac{1+\alpha}{2}}\bke{n^\frac{1-\alpha}{2}\abs{\nabla
        c}}~dx,
    \end{align*}
where $\overline\chi:=\sup_{\mathbb R^3_T}\abs{\chi\bke{c(\cdot)}}.$
Applying Young's inequality, we observe
    $$
        \int_{\mathbb
        R^3}\abs{\nabla n^\frac{1+\alpha}{2}}\bke{n^\frac{1-\alpha}{2}\abs{\nabla
        c}}~dx\leq\epsilon\int_{\mathbb R^3}\norm{\nabla
        n^\frac{1+\alpha}{2}}^2_2~dx+C(\epsilon)\int_{\mathbb R^3}
        n^{1-\alpha}\abs{\nabla c}^2~dx.
    $$
Combining above estimate and choosing sufficiently small
$\epsilon>0$, we have
    \begin{equation}\label{eq:the_above}
        \frac{d}{dt}\int_{\mathbb R^3 }n\log n dx+C'\norm{\nabla
        n^\frac{1+\alpha}{2}}_2^2\leq C\int_{\mathbb
        R^3}n^{1-\alpha}\abs{\nabla c}^2~dx.
    \end{equation}
Reminding that $1-3\alpha\geq 0$ (since $\alpha\leq 1/3$) and that
$c\in L^\infty(\mathbb R^3_T),$ the term $\int_{\mathbb R^3}
n^{1-\alpha}\abs{\nabla c}^2~dx$ is estimated as
    \begin{align*}
        \int_{\mathbb R^3}n^{1-\alpha}\abs{\nabla c}^2~dx&=\int_{\mathbb
        R^3} n^{1-\alpha}\nabla c\cdot\nabla c~ dx\\
        &\leq C\bke{\int_{\mathbb R^3}\abs{\nabla
        n^{1-\alpha}}\abs{\nabla c}~dx+\int_{\mathbb R^3}
        n^{1-\alpha}\abs{\Delta c}~dx}\\
        &\leq \int_{\mathbb R^3}(\epsilon\abs{\nabla
        n^\frac{1+\alpha}{2}}^2+C(\epsilon)n^{1-3\alpha}\abs{\nabla
        c}^2) dx+C\int_{\mathbb R^3} n^{1-\alpha}\abs{\Delta c}~dx.
    \end{align*}
Choosing small $\epsilon>0$ in the above, we estimate
\eqref{eq:the_above} as follows:
    \begin{equation}\label{eq:first}
        \frac{d}{dt}\int_{\mathbb R^3}n\log n~dx+C'_1\norm{\nabla
        n^\frac{1+\alpha}{2}}_2^2\leq C_1\int_{\mathbb R^3}n^{1-3\alpha}
        \abs{\nabla c}^2+\abs{n^{1-\alpha}}\abs{\Delta c}~dx
    \end{equation}
for some $C_1>0$ and $C'_1>0.$ Next, Multiplying
\eqref{eq:Chemotaxis-approximate}$_1$ with $n^\alpha$ and
integrating it by part gives
    \begin{align*}
        \frac{1}{1+\alpha}\frac{d}{dt}\norm{n}_{1+\alpha}^{1+\alpha}+\frac{4\alpha(1+\alpha)}{\bke{1+2\alpha}^2}
        \norm{\nabla n^\frac{1+2\alpha}{2}}_2^2&=\int_{\mathbb R^3}\nabla
        n^\alpha\cdot n\chi(c)\nabla c~dx\\
        &\leq \frac{2\alpha\overline\chi}{1+\alpha}\int_{\mathbb
        R^3}\abs{\nabla n^\frac{1+2\alpha}{2}}\bke{n^\frac{1}{2}\abs{\nabla
        c}}~dx.
    \end{align*}
Applying Young's inequality to the integrand in the right-hand side
of the above, we have
    \begin{equation}\label{eq:n^alpha_test}
        \frac{d}{dt}\norm{n}_{1+\alpha}^{1+\alpha}+C'\norm{\nabla
        n^\frac{1+2\alpha}{2}}_2^2\leq C\int_{\mathbb R^3}n\abs{\nabla c}^2~dx
    \end{equation}
for some $C'>0.$ Since the integral term of the right-hand side in
\eqref{eq:n^alpha_test} is estimated as
    \begin{align*}
        \int_{\mathbb R^3}n\abs{\nabla c}^2~dx&=\int_{\mathbb R^3}n\nabla c\cdot\nabla
        c~dx\\
        &\leq C\bke{\int_{\mathbb R^3}\abs{\nabla n}\abs{\nabla c}~dx+\int_{\mathbb R^3}n\abs{\Delta
        c}~dx}\\
        &\leq C\bke{\int_{\mathbb R^3}n^\frac{1-2\alpha}{2}\abs{\nabla n^\frac{1+2\alpha}{2}}\abs{\nabla
        c}~dx+\int_{\mathbb R^3}n\abs{\Delta c}~dx},
    \end{align*}
it follows from Young's inequality that
    \begin{equation}\label{eq:second}
        \frac{d}{dt}\norm{n}_{1+\alpha}^{1+\alpha}+C'_2\norm{\nabla
        n^\frac{1+2\alpha}{2}}_2^2\leq C_2\bke{\int_{\mathbb R^3}n^{1-2\alpha}\abs{\nabla c}^2
        +n\abs{\Delta c}~dx}
    \end{equation}
for some $C_2>0$ and $C'_2>0.$ On the other hand, multiplying
\eqref{eq:Chemotaxis-approximate}$_2$ with $-\Delta c$ and using
integration by part, we get
    $$
        \frac{1}{2}\frac{d}{dt}\norm{\nabla
        c}_2^2+\norm{\Delta c}_2^2\leq\int_{\mathbb R^3}\bke{u\cdot\nabla
        c}\Delta c~dx+\overline\kappa\int_{\mathbb R^3}n\abs{\Delta c}~dx
    $$
where $\overline\kappa:=\max_{\mathbb R^3_T}\abs{\kappa(c(\cdot))}.$
Since the term $\int_{\mathbb R^3}\bke{u\cdot\nabla c}\Delta c~dx$
in the above is estimated as
    \begin{align*}
        \int_{\mathbb R^3}\bke{u\cdot\nabla c}\Delta c~dx&=\sum_{1\leq i,j\leq
        3}\int_{\mathbb R^3}~u_ic_{x_i}c_{x_jx_j}dx=-\sum_{1\leq i,j\leq
        3}\int_{\mathbb R^3}u_{i,x_j}c_{x_i}c_{x_j}~dx\\
        &=\sum_{1\leq i,j\leq 3}\int_{\mathbb
        R^3}u_{i,x_j}cc_{x_jx_i}~dx\leq C\norm{\nabla u}_2\norm{\nabla^2
        c}_2
    \end{align*}
where $u_i$ and $c_{x_i}$ denote the $i$-th component of $u$ and
$\frac{\partial c}{\partial x_i}$ respectively, we have
    \begin{equation}\label{eq:third}
        \frac{d}{dt}\norm{\nabla c}_2^2+\norm{\Delta c}_2^2\leq
        C_3\bke{\norm{\nabla u}_2\norm{\Delta c}_2+\int_{\mathbb R^3}n\abs{\Delta
        c}dx}
    \end{equation}
for some $C_3>0.$ For the fluid equation, energy estimate gives
    \begin{equation}\label{eq:fourth}
        \frac{d}{dt}\norm{u}_2^2+2\norm{\nabla
        u}_2^2\leq C_4\int_{\mathbb R^3}n\abs{u}~dx
    \end{equation}
for some $C_4>0.$ Finally, to bound $\int n\abs{\log n}$ in
(\ref{eq:first}), we recall that (see e.g. \cite{Liu-Lorz} and
\cite{CKL})
    \begin{equation}\label{CK-Nov27-10}
        \int_{\mathbb R^3} n|\log n|dx\leq \int_{\mathbb R^3} n\log
        n dx+2\int_{\mathbb R^3}\langle x\rangle ndx+C,
    \end{equation}
where $\langle x\rangle=(1+|x|^2)^{1/2}$ and (see e.g. $(18)$ of
\cite{CKK})
    \begin{align}\label{eq:fifth}
        \frac{d}{dt}\int_{\mathbb R^3} \langle x\rangle n
        dx&=\int_{\mathbb R^3}nu\cdot\nabla\langle x
        \rangle +\int_{\mathbb R^3}(n+\varrho)^{1+\alpha}\Delta\langle x\rangle
        +\int_{\mathbb R^3}\nabla \langle x\rangle\cdot n\chi(c)\nabla c\nonumber\\
        &\leq
        C\left(1+\|u\|_2^2+\|\nabla c\|_2^2\right)+\left(C(\epsilon)+\epsilon\|\nabla
        n^\frac{1+\alpha}{2}\|_2^2\right).
    \end{align}
 Summing up
\eqref{eq:first}$\sim$\eqref{eq:fifth} with sufficiently small
$\epsilon>0$, we have
    \begin{align}\label{eq:med_sum}
        \frac{d}{dt}&\left(\int_{\mathbb R^3}n(\log n+2\langle x\rangle)~dx+\norm{n}_{1+\alpha}^{1+\alpha}+\norm{\nabla c}_2^2+\norm{u}_2^2
        \right)\\
        \nonumber &+C'_5 \left(\norm{\nabla n^\frac{1+\alpha}{2}}_2^2+\norm{\nabla n^\frac{1+2\alpha}{2}}_2^2
        +\norm{\Delta c}_2^2+\norm{\nabla
        u}_2^2\right)\\
        \nonumber \leq C_5&\left(\int_{\mathbb R^3}n^{1-3\alpha}\abs{\nabla c}^2dx
        +\int_{\mathbb R^3}n^{1-2\alpha}\abs{\nabla c}^2dx
        +\int_{\mathbb R^3}n^{1-\alpha}\abs{\Delta c}dx\right.\\
        \nonumber&+\int_{\mathbb R^3} n\abs{\Delta
        c}dx +\left. \norm{\nabla u}_2\norm{\Delta c}_2+\int_{\mathbb R^3}n\abs{u}
        dx\right).\\
        \nonumber:=C_5&\bke{\rm{I}+\rm{II}+\rm{III}+\rm{IV}+\rm{V}+\rm{VI}}.
    \end{align}
for some $C'_5>0$ and $C_5>0.$ Due to $0\leq 1-3\alpha <2/3$ and $0<
1-2\alpha <2/3$, we note that
    \begin{equation*}
        \rm{I}\leq\left\{
             \begin{array}{cl}
               \int_{\mathbb R^3} \bke{C(\epsilon_1)+\epsilon_1 n^\frac{2}{3}} \abs{\nabla c}^2~dx, & \mbox{if}\,\,1/6<\alpha<1/3; \\
               \norm{\nabla c}_2^2, & \mbox{if}\,\,\alpha=1/3,
             \end{array}
           \right.
    \end{equation*}
and
    \begin{equation*}
        \mathrm{II}\leq\int_{\mathbb R^3}\bke{C(\epsilon_2)+\epsilon_2n^\frac{2}{3}}\abs{\nabla c}^2~dx,
    \end{equation*}
where the positive constants $\epsilon_1$ and $\epsilon_2$ will be
determined later. Hence, it follows from H$\mathrm{\ddot{o}}$lder
inequality and Sobolev embedding that
    \begin{equation}\label{eq:A1}
        \rm{I}\leq\left\{
             \begin{array}{cl}
               C(\epsilon_1)\norm{\nabla c}_2^2+\epsilon_1\norm{n_0}_1^\frac{2}{3}\norm{\Delta c}_2^2, &
               \mbox{if}\,\, 1/6<\alpha<1/3; \\
               \norm{\nabla c}_2^2, & \mbox{if}\,\,\alpha=1/3,
             \end{array}
           \right.
    \end{equation}
and
    \begin{equation}\label{eq:A2}
        \mathrm{II}\leq C(\epsilon_2)\norm{\nabla c}_2^2+\epsilon_2\norm{n_0}_1^\frac{2}{3}\norm{\Delta
        c}_2^2.
    \end{equation}

To estimate III, we apply H$\mathrm{\ddot{o}}$lder and Young
inequality to have $\mathrm{III}\leq
C(\epsilon)\norm{n}_{2-\alpha}^{2-\alpha}+\epsilon\norm{\Delta
c}_2^2.$ Noting that $\frac{4}{3}\leq\frac{6-6\alpha}{2+3\alpha}<2$
(which is obtained from the condition $1/6<\alpha\leq 1/3$), we
obtain, from Young's inequality and Gagliardo-Nierenberg inequality,
the following :
    $$
        \norm{n}_{2-\alpha}^{2-\alpha}\leq
        C\norm{n_0}_1^\frac{1+4\alpha}{2+3\alpha}\norm{\nabla
        n^\frac{1+\alpha}{2}}_2^\frac{6-6\alpha}{2+3\alpha}\leq
        C(\epsilon')\norm{n_0}_1^\frac{1+4\alpha}{2+3\alpha}+\epsilon'\norm{\nabla
        n^\frac{1+\alpha}{2}}_2^2.
    $$
Hence by choosing sufficient $\epsilon'>0$, we have
    \begin{equation}\label{eq:A3}
        \mathrm{III}\leq
        C(\epsilon_3)+\epsilon_3\norm{\nabla n^\frac{1+\alpha}{2}}_2^2+\epsilon_3\norm{\Delta
        c}_2^2
    \end{equation}
where $\epsilon_3>0$ will be specified later. We estimate \rm{IV}
via a calculation similar to the above as follows.
H$\rm{\ddot{o}}$lder and Young inequality give $\mathrm{IV}\leq
C(\epsilon)\norm{n}_2^2+\epsilon\norm{\Delta c}_2^2.$ Since
$\frac{3}{2}\leq\frac{6}{2+6\alpha}<2$ via $1/6<\alpha\leq 1/3,$ it
follows from Young and Gagliardo-Nierenberg inequality that
    $$
        \norm{n}_2^2\leq
        C\norm{n_0}_1^\frac{1+6\alpha}{2+6\alpha}\norm{\nabla
        n^\frac{1+2\alpha}{2}}_2^\frac{6}{2+6\alpha}\leq
        C(\epsilon')\norm{n_0}_1^\frac{1+6\alpha}{2+3\alpha}+\epsilon'\norm{\nabla
        n^\frac{1+2\alpha}{2}}_2^2.
    $$
Therefore, we have
    \begin{equation}\label{eq:A4}
        \mathrm{IV}\leq C(\epsilon_4)+\epsilon_4\norm{\nabla
        n^\frac{1+2\alpha}{2}}_2^2+\epsilon_4\norm{\Delta c}_2^2.
    \end{equation}
It is straightforward that V can be estimated as
    \begin{equation}\label{eq:A5}
        \mathrm{V}\leq C(\epsilon_5)\norm{\nabla
        u}_2^2+\epsilon_5\norm{\Delta c}_2^2.
    \end{equation}
The positive constants $\epsilon_4$ in \eqref{eq:A4} and
$\epsilon_5$ in \eqref{eq:A5} will also be specified later. Finally,
we estimate VI as follows. Via H$\rm{\ddot{o}}$lder and Young
inequality and Sobolev embedding, we obtain
    $$
        \mathrm{VI}\leq C\norm{n}_\frac{6}{5}\norm{u}_{6}\leq
        C(\epsilon)\norm{n}_\frac{6}{5}+\epsilon\norm{\nabla
        u}_2^2.
    $$
Since $1<\frac{6}{5}<3+6\alpha$ and $0<\frac{2}{2+6\alpha}<2$,
$\norm{n}_\frac{6}{5}^2$  is estimated, via Gagliardo-Nierenberg
inequality and Young's inequality, as
    $$
        \norm{n}_\frac{6}{5}^2\leq\norm{n_0}_1^\frac{(5)(1+2\alpha)-2}{2+6\alpha}
        \norm{\nabla
        n^\frac{1+2\alpha}{2}}_2^\frac{2}{2+6\alpha}\leq
        C(\epsilon')+\epsilon'\norm{\nabla
        n^\frac{1+2\alpha}{2}}_2^2.
    $$
Thus, we conclude that
    \begin{equation}\label{eq:A6}
        \mathrm{VI}\leq C(\epsilon_6)+\epsilon_6\norm{\nabla
        n^\frac{1+2\alpha}{2}}_2^2+\epsilon_6\norm{\nabla
        u}_2^2.
    \end{equation}
Summing up \eqref{eq:A1}$\sim$\eqref{eq:A6} with choosing
sufficiently small $\epsilon_1,...,\epsilon_6>0$, we finally have
    \begin{align}\label{eq:med_sum_2}
        \frac{d}{dt}&\left(\int_{\mathbb R^3}n(\log n+2\langle x\rangle)~dx+\norm{n}_{1+\alpha}^{1+\alpha}+\norm{\nabla c}_2^2+\norm{u}_2^2
        \right)\\
        \nonumber &+C'_6 \left(\norm{\nabla n^\frac{1+\alpha}{2}}_2^2+\norm{\nabla n^\frac{1+2\alpha}{2}}_2^2
        +\norm{\Delta c}_2^2+\norm{\nabla
        u}_2^2\right)\\
        \nonumber \leq C_6&\bke{\norm{\nabla c}_2^2+\norm{\nabla u}_2^2+1}
    \end{align}
for some $C'_6>0$ and $C_6>0.$ To absorb the term $C_6\norm{\nabla
u}_2^2$ in the right-hand side of the above, we test $u$ to the
\eqref{eq:Chemotaxis-approximate}$_3$ and then multiply sufficiently
large constant $M>C_6$ to have
    \begin{equation}\label{eq:the_above_2}
        \frac{M}{2}\frac{d}{dt}\norm{u}_2^2+M\norm{\nabla u}_2^2\leq
        CM\int_{\mathbb R^3}n\abs{u}~dx.
    \end{equation}
As \eqref{eq:A6}, the integral in the right-hand side of the above
is estimated as
    $$
        \int_{\mathbb R^3}n\abs{u}~dx\leq
        C(\epsilon)+\epsilon\norm{\nabla
        n^\frac{1+2\alpha}{2}}_2^2+\epsilon\norm{\nabla u}_2^2.
    $$
Hence substituting it to \eqref{eq:the_above_2}, choosing
sufficiently small $\epsilon>0$ and then adding it to
\eqref{eq:med_sum_2}, we have
    \begin{align}\label{eq:final_sum}
        \frac{d}{dt}&\left(\int_{\mathbb R^3}n(\log n+2\langle x\rangle)~dx+\norm{n}_{1+\alpha}^{1+\alpha}+\norm{\nabla c}_2^2
        +\frac{M+2}{2}\norm{u}_2^2\right)\\
        \nonumber &+C' \left(\norm{\nabla n^\frac{1+\alpha}{2}}_2^2+\norm{\nabla n^\frac{1+2\alpha}{2}}_2^2
        +\norm{\Delta c}_2^2+\norm{\nabla u}_2^2\right)\\
        \nonumber \leq C&\bke{\norm{\nabla c}_2^2+1}.
    \end{align}
Integrating in time variables and combining estimate
\eqref{CK-Nov27-10}, we deduce \eqref{eq:E'+D<CE}.
\\
\\
(Case $1/3<\alpha\leq 1$):\,\, Performing similar calculations as in
the above case, we obtain
    \begin{equation}\label{eq:first_2}
        \frac{d}{dt}\int_{\mathbb R^3} n\log n~dx+C'_1\norm{\nabla
        n^\frac{1+\alpha}{2}}_2^2\leq C_1\int_{\mathbb R^3}n^{1-\alpha}\abs{\nabla
        c}^2~dx,
    \end{equation}
    \begin{align}\label{eq:second_2}
        \frac{d}{dt}\norm{n}_{1+\alpha}^{1+\alpha}&+C'_2\norm{\nabla
        n^\frac{1+2\alpha}{2}}_2^2 \leq C\int_{\mathbb R^3}n\abs{\nabla
        c}^2~dx\\
        &\leq C_{2,\epsilon}\bke{\int_{\mathbb R^3}n^{1-\alpha}\abs{\nabla c}^2
        +n\abs{\Delta c}~dx}+\epsilon\norm{\nabla n^\frac{1+\alpha}{2}}_2^2\nonumber,
    \end{align}
    \begin{equation}\label{eq:third_2}
        \frac{d}{dt}\norm{\nabla c}_2^2+\norm{\Delta c}_2^2\leq
        C_3\bke{\norm{\nabla u}_2\norm{\Delta c}_2+\int_{\mathbb R^3}~n\abs{\Delta
        c}dx},
    \end{equation}
    \begin{equation}\label{eq:fourth_2}
        \frac{d}{dt}\norm{u}_2^2+2\norm{\nabla
        u}_2^2\leq C_4\int_{\mathbb R^3}n\abs{u}~dx
    \end{equation}
and
    \begin{align}\label{eq:fifth_2}
        \frac{d}{dt}\int_{\mathbb R^3} \langle x\rangle n
        dx&=\int_{\mathbb R^3}nu\cdot\nabla\langle x
        \rangle +\int_{\mathbb R^3}(n+\varrho)^{1+\alpha}\Delta\langle x\rangle
        +\int_{\mathbb R^3}\nabla \langle x\rangle\cdot n\chi(c)\nabla c\nonumber\\
        &\leq
        C_5\left(1+\|u\|_2^2+\|\nabla c\|_2^2\right)+\left(C(\epsilon)+\epsilon\|\nabla
        n^\frac{1+\alpha}{2}\|_2^2\right).
    \end{align}
Summing up
\eqref{eq:first_2}$\sim$\eqref{eq:fifth_2} with sufficiently small
$\epsilon>0$, we have
    \begin{align}\label{eq:med_sum22}
        \frac{d}{dt}&\left(\int_{\mathbb R^3}n(\log n+2\langle x\rangle)~dx+\norm{n}_{1+\alpha}^{1+\alpha}+\norm{\nabla c}_2^2+\norm{u}_2^2
        \right)\\
        \nonumber &+C'_6 \left(\norm{\nabla n^\frac{1+\alpha}{2}}_2^2+\norm{\nabla n^\frac{1+2\alpha}{2}}_2^2
        +\norm{\Delta c}_2^2+\norm{\nabla
        u}_2^2\right)\\
        \nonumber \leq C_6&\left(\int_{\mathbb R^3}n^{1-\alpha}\abs{\nabla c}^2dx+\int_{\mathbb R^3} n\abs{\Delta
        c}dx + \norm{\nabla u}_2\norm{\Delta c}_2+\int_{\mathbb R^3}n\abs{u}
        dx\right)\\
        \nonumber=:C_6&\bke{\rm{I}+\rm{II}+\rm{III}+\rm{IV}}.
    \end{align}
for some $C'_6>0$ and $C_6>0.$ Since $1/3<\alpha\leq 1$, it is
direct that $0\leq 1-\alpha<2/3$ and $0<\frac{6}{2+6\alpha}<2,$ and
thus, we obtain the same estimations as those in the previous case
for \rm{I}$\sim$\rm{IV}. To be more precise, we have
    \begin{equation}\label{eq:B1}
        \rm{I}\leq\left\{
             \begin{array}{cl}
               C(\epsilon_1)\norm{\nabla c}_2^2+\epsilon_1\norm{n_0}_1^\frac{2}{3}\norm{\Delta c}_2^2, &
               \mbox{if}\,\,1/3<\alpha<1; \\
               \norm{\nabla c}_2^2, &  \mbox{if}\,\,\alpha=1,
             \end{array}
           \right.
    \end{equation}
    \begin{equation}\label{eq:B2}
        \mathrm{II}\leq C(\epsilon_2)+\epsilon_2\norm{\nabla
        n^\frac{1+2\alpha}{2}}_2^2+\epsilon_2\norm{\Delta c}_2^2,
    \end{equation}
    \begin{equation}\label{eq:B3}
        \mathrm{III}\leq C(\epsilon_3)\norm{\nabla
        u}_2^2+\epsilon_3\norm{\Delta c}_2^2
    \end{equation}
and
    \begin{equation}\label{eq:B4}
        \mathrm{IV}\leq C(\epsilon_4)+\epsilon_4\norm{\nabla
        n^\frac{1+2\alpha}{2}}_2^2+\epsilon_4\norm{\nabla
        u}_2^2.
    \end{equation}
Following the same procedure as that in the previous case, we
conclude that
    \begin{align}\label{eq:final_sum2}
        \frac{d}{dt}&\left(\int_{\mathbb R^3}n(\log n+2\langle x\rangle)~dx+\norm{n}_{1+\alpha}^{1+\alpha}+\norm{\nabla c}_2^2
         +\frac{M+2}{2}\norm{u}_2^2\right)\\
        \nonumber &+C' \left(\norm{\nabla n^\frac{1+\alpha}{2}}_2^2+\norm{\nabla n^\frac{1+2\alpha}{2}}_2^2
        +\norm{\Delta c}_2^2+\norm{\nabla u}_2^2\right)\\
        \nonumber \leq C&\bke{\norm{\nabla c}_2^2+1},
    \end{align}
for some $M,~C,~C'>0,$ which gives the result after time integration
together with estimate \eqref{CK-Nov27-10}.
\\
\\
(Case $\alpha>1$):\,\, As in the previous cases, we obtain
    \begin{align}\label{eq:first_3}
        \frac{d}{dt}\int_{\mathbb R^3} n\log n
            dx+C_1'\left\|\nabla
            n^{\frac{1+\alpha}{2}}\right\|_2^2\leq&\int_{\mathbb R^3}\nabla
            n\cdot\left(\chi(c)\nabla c\right)~dx\\
        \nonumber\leq&C_1\int_{\mathbb R^3}n\abs{\nabla c}^2+n\abs{\Delta c}~dx,
    \end{align}
    \begin{equation}\label{eq:second_3}
        \frac{d}{dt}\norm{n}_{1+\alpha}^{1+\alpha}+C_2'\norm{\nabla
        n^\frac{1+2\alpha}{2}}_2^2\leq C_2\int_{\mathbb R^3}n\abs{\nabla
        c}^2~dx\quad\mbox{(see (\ref{eq:n^alpha_test}))},
    \end{equation}
    \begin{equation}\label{eq:third_3}
        \frac{d}{dt}\norm{\nabla c}_2^2+\norm{\Delta c}_2^2\leq
        C_3\bke{\norm{\nabla u}_2\norm{\Delta c}_2+\int_{\mathbb R^3}~n\abs{\Delta
        c}dx},
    \end{equation}
    \begin{equation}\label{eq:fourth_3}
        \frac{d}{dt}\norm{u}_2^2+2\norm{\nabla
        u}_2^2\leq C_4\int_{\mathbb
        R^3}n\abs{u}~dx
        \end{equation}
and
    \begin{align}\label{eq:fifth_3}
        \frac{d}{dt}\int_{\mathbb R^3} \langle x\rangle n
        dx&=\int_{\mathbb R^3}nu\cdot\nabla\langle x
        \rangle +\int_{\mathbb R^3}(n+\varrho)^{1+\alpha}\Delta\langle x\rangle
        +\int_{\mathbb R^3}\nabla \langle x\rangle\cdot n\chi(c)\nabla c\nonumber\\
        &\leq
        C_5\left(1+\|u\|_2^2+\|\nabla c\|_2^2\right)+\left(C(\epsilon)+\epsilon\|\nabla
        n^\frac{1+\alpha}{2}\|_2^2\right)
    \end{align}
 for some constants $C'_1, C'_2, C_1\sim C_5>0.$ Summing up
\eqref{eq:first_3}$\sim$\eqref{eq:fifth_3} with sufficiently small
$\epsilon>0$, we have
    \begin{align}\label{eq:med_sum2}
        \frac{d}{dt}&\left(\int_{\mathbb R^3}n(\log n+2\langle x\rangle)~dx+\norm{n}_{1+\alpha}^{1+\alpha}+\norm{\nabla c}_2^2+\norm{u}_2^2
        \right)\\
        \nonumber &+C'_5 \left(\norm{\nabla n^\frac{1+\alpha}{2}}_2^2+\norm{\nabla n^\frac{1+2\alpha}{2}}_2^2
        +\norm{\Delta c}_2^2+\norm{\nabla
        u}_2^2\right)\\
        \nonumber \leq C_5&\left(\int_{\mathbb R^3}n\abs{\nabla c}^2dx+\int_{\mathbb R^3} n\abs{\Delta
        c}dx + \norm{\nabla u}_2\norm{\Delta c}_2+\int_{\mathbb R^3}n\abs{u}
        dx\right)\\
        \nonumber:=C_5&\bke{\rm{I}+\rm{II}+\rm{III}+\rm{IV}}
    \end{align}
for some $C'_5>0$ and $C_5>0.$ We now estimate \rm{I} as follows.
Taking into account that $\frac{1+\alpha}{2}>1$ via $\alpha>1$, we
have $n\leq C(\epsilon)+ \epsilon n^{\frac{1+\alpha}{2}}$ due to
Young's inequality and, therefore, we get
    \begin{align*}
        \mathrm{I}\leq C(\epsilon)\norm{\nabla c}_2^2+\epsilon\int_{\mathbb R^3}
            n^\frac{1+\alpha}{2}\abs{\nabla c}^2~dx.
    \end{align*}
The second term in the right-hand side of the above is estimated by
    \begin{align*}
        \int_{\mathbb R^3} n^\frac{1+\alpha}{2}\nabla c\cdot\nabla
            c~dx \leq&C\bke{\int_{\mathbb R^3}\nabla n^\frac{1+\alpha}{2}\cdot\nabla c+n^\frac{1+\alpha}{2}\Delta c
            ~dx}\\
        \leq&C\bke{\norm{\nabla n^\frac{1+\alpha}{2}}_2^2+\norm{\nabla c}_2^2+\norm{n}_{1+\alpha}^{1+\alpha}+\norm{\Delta
            c}_2^2}.
    \end{align*}
We also note, due to Gagliardo-Nierenberg inequality and Young's
inequality that
\[
\norm{n}_{1+\alpha}^{1+\alpha}\leq
C\norm{n_0}_1^\frac{2+2\alpha}{2+3\alpha}\norm{\nabla
n^\frac{1+\alpha}{2}}_2^\frac{6\alpha}{2+3\alpha}\leq
C'+\norm{\nabla n^\frac{1+\alpha}{2}}_2^2,
\]
where we used that $\frac{6\alpha}{2+3\alpha}<2$. Combining
estimates, we conclude that
    \begin{equation}\label{eq:C1}
        \mathrm{I}\leq C(\epsilon_1)\bke{\norm{\nabla
        c}_2^2+1}+\epsilon_1\bke{\norm{\nabla n^\frac{1+\alpha}{2}}_2^2+\norm{\Delta
        c}_2^2}.
    \end{equation}
We observe that the other terms $\rm{II}\sim\rm{IV}$ are estimated
exactly the same as those in previous case. More precisely, we have
    \begin{equation}\label{eq:C2}
        \mathrm{II}\leq C(\epsilon_2)+\epsilon_2\norm{\nabla
        n^\frac{1+2\alpha}{2}}_2^2+\epsilon_2\norm{\Delta c}_2^2,
    \end{equation}
    \begin{equation}\label{eq:C3}
        \mathrm{III}\leq C(\epsilon_3)\norm{\nabla
        u}_2^2+\epsilon_3\norm{\Delta c}_2^2,
    \end{equation}
    \begin{equation}\label{eq:C4}
        \mathrm{IV}\leq C(\epsilon_4)+\epsilon_4\norm{\nabla
        n^\frac{1+2\alpha}{2}}_2^2+\epsilon_4\norm{\nabla
        u}_2^2.
    \end{equation}
Following similar procedures as before, we conclude that
    \begin{align}\label{eq:final_sum3}
        \frac{d}{dt}&\left(\int_{\mathbb R^3}n\log n~dx+\norm{n}_{1+\alpha}^{1+\alpha}+\norm{\nabla c}_2^2
        +\frac{M+2}{2}\norm{u}_2^2\right)\\
        \nonumber &+C' \left(\norm{\nabla n^\frac{1+\alpha}{2}}_2^2+\norm{\nabla n^\frac{1+2\alpha}{2}}_2^2
        +\norm{\Delta c}_2^2+\norm{\nabla
        u}_2^2\right)\\
        \nonumber \leq C&\bke{\norm{\nabla c}_2^2+1}
    \end{align}
for some $M,~C,~C'>0,$ which implies \eqref{eq:E'+D<CE}.
Since its verifications are just repetition of previous computations, the details are omitted.\\

$\bullet$\,\textbf{Case {\bf (ii)} and {\bf(iii)} in the Assumption
\ref{assume1-chi-kappa}}\,\, Here, we only deal with only the case
(iii), since the case (ii) can be proved in almost the same manner.
Due to the previous result, since the case $\alpha>1/6$ is direct,
it suffices to consider the case $0<\alpha\leq 1/6$. Multiplying
\eqref{eq:Chemotaxis-approximate}$_2$ with $-\Delta c$ and
integrating it by part gives
    $$
        \frac{1}{2}\frac{d}{dt}\norm{\nabla
        c}_2^2+\norm{\Delta c}_2^2\leq\int_{\mathbb R^3}\bke{u\cdot\nabla
        c}\Delta c~dx+\int_{\mathbb R^3}\kappa(c)n\Delta c~dx.
    $$
Since the last integral of the above is estimated as
    \begin{align*}
        \int_{\mathbb R^3}\kappa(c)n\Delta c~dx&=-\int_{\mathbb
        R^3}\bke{\kappa'(c)n\nabla c+\kappa(c)\nabla n}\nabla
        c~dx\\
        &\leq-\kappa_0\int_{\mathbb R^3}n\abs{\nabla
        c}^2+\overline{\kappa}\int_{\mathbb R^3}\abs{\nabla n}\abs{\nabla
        c}~dx,
    \end{align*}
we have
    \begin{align}\label{eq:case2_1}
        \nonumber\frac{d}{dt}\norm{\nabla c}_2^2&+\norm{\Delta c}_2^2+2\kappa_0
        \int_{\mathbb R^3}n\abs{\nabla c}^2~dx\\
        &\leq
        C\bke{\norm{\nabla u}_2\norm{\Delta c}_2+\int_{\mathbb R^3}\abs{\nabla
        n^\frac{1+\alpha}{2}}
        \bke{n^\frac{1-\alpha}{2}\abs{\nabla c}}dx}.
    \end{align}
Multiplying $(\ref{eq:Chemotaxis-approximate})_1$ with $(1+\log n)$
and integrating it by parts, we get
    \begin{align}\label{eq:case2_2}
        \nonumber&\frac{d}{dt}\int_{\mathbb R^3} n\log n
        dx+\frac{4}{1+\alpha}\left\|\nabla
        n^{\frac{1+\alpha}{2}}\right\|_2^2\leq\int_{\mathbb R^3}\nabla
        n\cdot\left(\chi(c)\nabla c\right)~dx\\
        &\qquad\leq \frac{2\overline\chi}{1+\alpha}\int_{\mathbb
        R^3}\abs{\nabla n^\frac{1+\alpha}{2}}\bke{n^\frac{1-\alpha}{2}\abs{\nabla
        c}}~dx.
    \end{align}
Since the integral in the last inequality of \eqref{eq:case2_1} and
\eqref{eq:case2_2} is estimated, due to Young's inequlity, as
    $$
        \int_{\mathbb R^3}\abs{\nabla n^\frac{1+\alpha}{2}}\bke{n^\frac{1-\alpha}{2}\abs{\nabla c}}
        \leq \epsilon\norm{\nabla
        n^\frac{1+\alpha}{2}}_{2}^{2}+C(\epsilon)\int_{\mathbb
        R^3}\abs{n^\frac{1-\alpha}{2}\nabla c}^2,
    $$
we obtain by choosing a sufficiently small $\epsilon$
    \begin{align}\label{eq:case2_3}
        \nonumber\frac{d}{dt}\left[\int_{\mathbb R^3}n\log ndx+\norm{\nabla c}_2^2
        \right]&+\left[\norm{\nabla n^\frac{1+\alpha}{2}}_2^2+\norm{\Delta c}_{2}^{2}+2\kappa_0\int_{\mathbb R^3}
        n\abs{\nabla c}^2dx\right]\\
        &\leq C\bke{\norm{\nabla u}_{2}\norm{\Delta c}_{2}+\int_{\mathbb
        R^3}n^{1-\alpha}\abs{\nabla c}^2 dx}.
    \end{align}
Multiplying \eqref{eq:Chemotaxis-approximate}$_1$ with $n^\alpha$
and integrating it by parts gives
    \begin{align*}
        \frac{1}{1+\alpha}\norm{n}_{1+\alpha}^{1+\alpha}+\frac{4\alpha(1+\alpha)}{\bke{1+2\alpha}^2}
        \norm{\nabla n^\frac{1+2\alpha}{2}}_2^2&=\int_{\mathbb R^3}\nabla
        n^\alpha\cdot n\chi(c)\nabla c~dx\\
        &\leq \frac{2\alpha\overline\chi}{1+\alpha}\int_{\mathbb
        R^3}\abs{\nabla n^\frac{1+2\alpha}{2}}\bke{n^\frac{1}{2}\abs{\nabla
        c}}~dx.
    \end{align*}
Young's inequality implies
    $$
        \frac{1}{1+\alpha}\frac{d}{dt}\norm{n}_{1+\alpha}^{1+\alpha}+\norm{\nabla
        n^\frac{1+2\alpha}{2}}_2^2\leq C_0\int_{\mathbb R^3}n\abs{\nabla
        c}^2 dx
    $$
for some $C_0>0.$ Now we choose sufficiently small $\epsilon_0>0$
satisfying $\epsilon_0C_0<2\kappa_0.$ Multiplying it by the both
sides of the above gives
    \begin{align}\label{eq:case2_4}
        \frac{\epsilon_0}{1+\alpha}\frac{d}{dt}\norm{n}_{1+\alpha}^{1+\alpha}+\epsilon_0\norm{\nabla
        n^\frac{1+2\alpha}{2}}_2^2\leq \epsilon_0C_0\int_{\mathbb
        R^3}n\abs{\nabla c}^2 dx.
    \end{align}
Summing up \eqref{eq:case2_3} and \eqref{eq:case2_4}, we have
    \begin{align*}
        &\frac{d}{dt}\left[\int_{\mathbb R^3}n\log n+\norm{\nabla
        c}_2^2+\frac{\epsilon_0}{1+\alpha}\norm{n}_{1+\alpha}^{1+\alpha}\right]\\
        &\qquad\qquad\quad+\left[\norm{\nabla n^\frac{1+\alpha}{2}}_{2}^{2}+\epsilon_0\norm{\nabla
        n^\frac{1+2\alpha}{2}}_{2}^{2}+\norm{\Delta c}_{2}^{2}+\delta_0\int_{\mathbb R^3}n\abs{\nabla c}^2
        dx\right]\\
        \leq &C\bke{\norm{\nabla u}_{2}\norm{\Delta c}_{2}+\int_{\mathbb
        R^3}n^{1-\alpha}\abs{\nabla c}^2 dx},
    \end{align*}
where $\delta_0:=2\kappa_0-\epsilon_0C_0>0.$ Therefore, applying
Young's inequality $n^{1-\alpha}\leq C(\epsilon)+\epsilon n$
        with sufficiently small $\epsilon>0$, we obtain
    \begin{align*}
        &\frac{d}{dt}\left[\int_{\mathbb R^3}n\log ndx+\norm{\nabla c}_2^2+\frac{\epsilon_0}{1+\alpha}
        \norm{n}_{1+\alpha}^{1+\alpha}\right]\\
        &\qquad\qquad\quad+\left[\norm{\nabla n^\frac{1+\alpha}{2}}_{2}^{2}+\epsilon_0\norm{\nabla n^\frac{1+2\alpha}{2}}_{2}^{2}
        +\norm{\Delta c}_{2}^{2}+\frac{\delta_0}{2}\int_{\mathbb R^3} n\abs{\nabla c}^2\right]\\
        \leq &C\bke{\norm{\nabla c}_{2}^{2}+\norm{\nabla u}_{2}^{2}}.
    \end{align*}
Finally, absorbing the term $\norm{\nabla u}_2^2$ as we have done
previously (see \eqref{eq:the_above_2}) and combining estimate
\eqref{eq:fifth}, we deduce \eqref{eq:E'+D<CE}. This completes the
proof.
\end{proof}

\section{Bounded weak solutions}

In this section, we provide some uniform estimates of solutions to
the system (\ref{eq:Chemotaxis-approximate}) with $\tau=0$, which is
used in Section 4 to construct the bounded weak solution of the
equation \eqref{eq:Chemotaxis} and prove Theorem
\ref{thm:3D-L^infinite bound}.
To be more precise, our result is read as follows:
\begin{lemma}\label{lemma:bdd-weak}
Let $\tau=0$ and $~T>0$ be given. Suppose that $(n,c,u)$ is a
classical solution for the system
(\ref{eq:Chemotaxis-approximate})$_\varrho,$ $\varrho\in(0,1)$ with
the smooth initial datum $(n_{0\varrho},c_{0\varrho},u_{0\varrho})$
which satisfies the initial conditions \eqref{initial-weak} and
\eqref{initial-boundweak}. Assume further that $\chi,\kappa$ and
$\alpha$ satisfies Assumption \ref{assume2-chi-kappa}. Then, for any
$0<t\le T$
%
\begin{equation}\label{CKK100-Dec14}
n\in L^{\infty}(0,T;L^p(\mathbb R^3)), \qquad 1\le p\leq \infty,
\end{equation}
\begin{equation}\label{CKK150-Dec14}
\nabla n^{\frac{p+\alpha}{2}}\in L^{2}(0,T;L^2(\mathbb R^3)), \qquad
1\le p< \infty,
\end{equation}
\begin{equation}\label{CKK200-Dec14}
c, u\in L^\infty(0,T;W^{1,q}(\mathbb R^3))\cap
L^q(0,T;W^{2,q}(\mathbb R^3)), \qquad 2\le q<\infty,
\end{equation}
\begin{equation}\label{CKK300-Dec14}
c_t, u_t\in L^q(0,T;L^q(\mathbb R^3)), \qquad 2\le q<\infty.
\end{equation}
\end{lemma}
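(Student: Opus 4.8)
The plan is to bootstrap from the weak-solution estimates of Lemma~\ref{lemma:weak} (which already apply, since Assumption~\ref{assume2-chi-kappa} implies Assumption~\ref{assume1-chi-kappa} when $\tau=0$) by running an $L^p$-energy hierarchy for $n$ together with maximal parabolic regularity \eqref{heat-maximal} for $c$ and $u$. The starting point is that Lemma~\ref{lemma:weak} gives, uniformly in $\varrho$, $n\in L^\infty(0,T;L^{1+\alpha})$, $\nabla n^{(1+\alpha)/2},\nabla n^{(1+2\alpha)/2}\in L^2((0,T)\times\R^3)$, $c\in L^\infty(0,T;H^1)\cap L^2(0,T;H^2)$, $u\in L^\infty(0,T;L^2)$, $\nabla u\in L^2((0,T)\times\R^3)$, and $\|c\|_{L^\infty(\R^3_T)}\le\|c_0\|_\infty$. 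Since $\tau=0$, the fluid equation is linear (Stokes), so \eqref{heat-maximal} (applied to the Stokes system, which satisfies the same maximal-regularity estimate) upgrades $u$ directly from the forcing $n\nabla\phi$: once we control $n$ in $L^q((0,T)\times\R^3)$ we get $u\in L^q(0,T;W^{2,q})$, $u_t\in L^q(0,T;L^q)$, and by Sobolev embedding $\nabla u\in L^q(0,T;L^\infty)$ for $q>3$.

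The core is the $L^p$ estimate for $n$. Testing \eqref{eq:Chemotaxis-approximate}$_1$ with $n^{p-1}$ and integrating by parts yields, for each $p\ge 1+\alpha$,
\[
\frac{1}{p}\frac{d}{dt}\|n\|_p^p+\frac{4(p-1)(1+\alpha)}{(p+\alpha)^2}\|\nabla n^{(p+\alpha)/2}\|_2^2
\le (p-1)\int_{\R^3}n^{p-1}\chi(c)\nabla c\cdot\nabla n\,dx,
\]
where the convection term $u\cdot\nabla n$ drops by $\mathrm{div}\,u=0$. The right side is rewritten as $c_p\int |\nabla n^{(p+\alpha)/2}|\,(n^{(p-\alpha)/2}|\nabla c|)\,dx$; Young's inequality absorbs $\epsilon\|\nabla n^{(p+\alpha)/2}\|_2^2$ and leaves $C(\epsilon)\int n^{p-\alpha}|\nabla c|^2\,dx$. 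Integrating once more by parts, $\int n^{p-\alpha}|\nabla c|^2 \le C\big(\int n^{p-\alpha-?}|\nabla n^{(p+\alpha)/2}||\nabla c| + \int n^{p-\alpha}|\Delta c|\big)$, and one estimates $\int n^{p-\alpha}|\Delta c|\le \|n\|_{2(p-\alpha)}^{2(p-\alpha)}\cdot(\text{small})+\epsilon\|\Delta c\|_2^2$ or, at higher $p$, via $\|c\|_{L^\infty}$ and Hölder against $\|\Delta c\|_{L^q}$. The exponents on $n$ that arise (like $n^{p-\alpha}$, $n^{p-3\alpha}$) are handled by Gagliardo--Nirenberg interpolation between $L^1$ (mass, conserved) and $\nabla n^{(p+\alpha)/2}\in L^2$; the condition $\alpha>1/8$ (resp.\ $\alpha>1/6$) is exactly what makes the relevant GN exponent strictly below $2$ so that Young's inequality closes the loop. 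In the cases (ii)/(iii) of Assumption~\ref{assume2-chi-kappa} the extra good term $\kappa_0\int n|\nabla c|^2$ (from testing the $c$-equation with $-\Delta c$ and using $\kappa'\ge\kappa_0$, as in the proof of Lemma~\ref{lemma:weak}) provides an additional damping that relaxes the threshold from $1/6$ to $1/8$.

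Once $n\in L^\infty(0,T;L^p)$ is known for every finite $p$ (uniformly in $\varrho$, with constants depending on $p$), a Moser-type iteration — testing with $n^{p-1}$, using the dissipation $\|\nabla n^{(p+\alpha)/2}\|_2^2$, Sobolev embedding $\dot H^1\hookrightarrow L^6$, and tracking how the constant grows with $p$ — upgrades this to $n\in L^\infty((0,T)\times\R^3)$, again uniformly in $\varrho$; this is the standard De~Giorgi/Moser scheme for porous-medium-type equations with a controlled drift, where the drift $\chi(c)\nabla c$ is already known to lie in $L^\infty(0,T;L^6)\cap L^2(0,T;L^\infty)$-type spaces from the $c$-regularity. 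With $n\in L^\infty$ in hand, $c$ satisfies a heat equation with bounded forcing $\kappa(c)n\in L^\infty$, so \eqref{heat-maximal} gives $c\in L^q(0,T;W^{2,q})$ and $c_t\in L^q(0,T;L^q)$ for all $q<\infty$, and similarly for $u$ via the Stokes maximal regularity with forcing $n\nabla\phi\in L^\infty$; the $L^\infty(0,T;W^{1,q})$ bounds follow by interpolation or by the time-trace embedding $L^q(0,T;W^{2,q})\cap W^{1,q}(0,T;L^q)\hookrightarrow C([0,T];W^{1,q})$. Finally \eqref{CKK150-Dec14}, $\nabla n^{(p+\alpha)/2}\in L^2((0,T)\times\R^3)$, is just the accumulated dissipation from the $L^p$ estimate at that $p$.

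I expect the main obstacle to be the $L^p$ step for $n$ at the \emph{borderline} exponents near $p\approx 1$ under case (i): one must verify that the Gagliardo--Nirenberg exponents stay strictly subcritical precisely when $\alpha>1/6$ (and that the improvement to $\alpha>1/8$ genuinely uses the $\kappa_0\int n|\nabla c|^2$ term in cases (ii)/(iii)), and then that the constants in the Moser iteration grow slowly enough (polynomially in $p$) to yield the $L^\infty$ bound — all uniformly in the regularization parameter $\varrho$, keeping in mind the harmless shift $(n+\varrho)^{1+\alpha}$ which only helps the dissipation term. The $c$- and $u$-regularity are then routine consequences of maximal regularity.
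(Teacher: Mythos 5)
Your overall architecture coincides with the paper's: start from the base estimates of Lemma~\ref{lemma:weak} (i.e.\ \eqref{eq:from_lemma}), run an $L^p$-hierarchy for $n$ coupled to maximal parabolic regularity \eqref{heat-maximal} for $c$, pass $p\to\infty$ for the $L^\infty$ bound, and finish $c$ and $u$ by maximal regularity with bounded forcing. However, two points in your plan are genuine gaps rather than omitted routine detail. First, the convection term $u\cdot\nabla c$: the whole hierarchy reduces to proving $\int_0^T\|\Delta c\|^2_{6p/(2p+3\alpha)}\,ds<\infty$, and the maximal-regularity estimate for $c$ then forces you to bound $\|u\cdot\nabla c\|_{L^2_tL^{r}_x}$ at a stage where $n$ is only known in $L^\infty_tL^{1+\alpha}_x$. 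Your proposed route for $u$ (``once we control $n$ in $L^q$ we get $u\in L^qW^{2,q}$, hence $\nabla u\in L^\infty$ for $q>3$'') is circular there, since high $L^q$ integrability of $n$ is exactly what is being proved. The paper breaks the circle by using $\tau=0$ to write the vorticity equation $\omega_t-\Delta\omega=-\nabla\times(n\nabla\phi)$, which yields $u\in L^\infty(0,T;L^6)$ from an $L^2$ bound on $n$ obtained by interpolating the Lemma~\ref{lemma:weak} dissipation; this ingredient is absent from your plan and is essential.

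Second, the arithmetic that closes the loop is not established: your ``$n^{p-\alpha-?}$'' and the assertion that ``$\alpha>1/8$ is exactly what makes the relevant GN exponent strictly below $2$'' are the crux, and in the paper this requires a two-stage bootstrap for $1/8<\alpha\le 1/3$ (first $n\in L^\infty_tL^p_x$ for $1\le p<1+4\alpha$, with interpolation exponents $\theta_1,\dots,\theta_5$ all checked to be subcritical, then a specific intermediate exponent $p_0=\tfrac32-\tfrac{3\alpha}{4}<1+4\alpha$ used to reach all finite $p$). Relatedly, you place the $\kappa_0\int n|\nabla c|^2$ damping inside the $L^p$ hierarchy as the mechanism relaxing $1/6$ to $1/8$; in the paper that term is used only in Lemma~\ref{lemma:weak} to secure the base estimate \eqref{eq:from_lemma} for small $\alpha$ under cases (ii)/(iii), while the bootstrap of the present lemma runs identically for every $\alpha>1/8$ once \eqref{eq:from_lemma} is available. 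Your final $L^\infty$ step is fine in spirit (the paper uses a direct $p\to\infty$ limit with constants $(Cp^2t)^{1/p}\to1$ rather than a full Moser iteration, which is simpler because $\nabla c\in L^\infty$ is already known at that point), and the $c$, $u$ conclusions are indeed routine once $n\in L^\infty$.
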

\begin{proof}
Since Assumption \ref{assume2-chi-kappa} is stronger than Assumption
\ref{assume1-chi-kappa}, it is automatic, due to Lemma
\ref{lemma:weak}, that
    \begin{equation}\label{eq:from_lemma}
        n\in
        L^\infty(0,T;L^p(\mathbb R^3))\quad\mathrm{and}\quad\nabla
        n^{\frac{p+\alpha}{2}}\in L^2(0,T;L^2(\mathbb R^3)),\quad 1\leq
        p\leq 1+\alpha.
    \end{equation}
In what follows, we complete the proof by showing that for each
$\alpha>1/8,$ if $n$ satisfies \eqref{eq:from_lemma} then
\eqref{CKK100-Dec14} $\sim$ \eqref{CKK300-Dec14} hold. We remark
that the first case (i) in the Assumption \ref{assume2-chi-kappa},
i.e. $\alpha>1/6$, cannot be relaxed as $\alpha>1/8$, since
\eqref{eq:from_lemma} is not available in case only $\alpha>1/8$ is
assumed.

First, we consider $L^p$ estimate of
\eqref{eq:Chemotaxis-approximate}$_1$
    $$
        \frac{1}{p}\frac{d}{dt}\norm{n}_{p}^{p}
        +\int_{\mathbb R^3}\nabla n^{p-1}\cdot \nabla(n+\varrho)^pdx=-\int_{\mathbb
        R^3}n^{p-1}\nabla\cdot\bke{\chi(c)n\nabla c}~dx.
    $$
Since $\nabla n^{p-1}\cdot \nabla n=(4(p-1)/p^2)\abs{\nabla
n^{p/2}}^2\geq 0,$ we have
    $$
        \frac{1}{p}\frac{d}{dt}\norm{n}_p^p+\frac{4(p-1)(1+\alpha)}{(p+\alpha)^2}\norm{\nabla
        n^\frac{p+\alpha}{2}}_2^2\leq\abs{\int_{\mathbb
        R^3}n^{p-1}\nabla\cdot\bke{\chi(c)n\nabla c}~dx}.
    $$
Since the righthand side of the above is estimated as
    $$
        \abs{\int_{\mathbb R^3}n^{p-1}\nabla \cdot (\chi(c) n \nabla
        c)
        dx}
        \leq  C\int_{\mathbb R^3}\abs{\nabla
        n^\frac{p+\alpha}{2}}n^\frac{p-\alpha}{2}\abs{\nabla c}
        dx,
    $$
it follows from Young's inequality that
\begin{equation}\label{Lp-est-Dec10}
        \frac{1}{p}\frac{d}{dt}\norm{n}_p^p+\norm{\nabla
        n^\frac{p+\alpha}{2}}_2^2\leq C p\int_{\mathbb R^3} n^{p-\alpha}\abs{\nabla
        c}^2dx.
\end{equation}
Using H\textrm{\"{o}}lder inequality, Sobolev embedding and Young's
inequality, we estimate the integral in the righthand side of the
above as
\[
\int_{\mathbb R^3}n^{p-\alpha}\abs{\nabla c}^2
dx\leq\norm{n^{p-\alpha}}_{\frac{p}{p-\alpha}}\norm{\abs{\nabla
c}^2}_{\frac{p}{\alpha}}
\]
\[
\leq C p\norm{n}_p^{p-\alpha}\norm{\Delta
c}^2_{\frac{6p}{2p+3\alpha}}\leq
Cp\bke{\frac{\alpha}{p}+\frac{p-\alpha}{p}\norm{n}_p^p}\norm{\Delta
c}_{\frac{6p}{2p+3\alpha}}^2.
\]
Combining estimates, we get
    \begin{align*}
        \frac{d}{dt}\norm{n}_p^p&\leq C p^3\bke{\frac{\alpha}{p}+\frac{p-\alpha}{p}\norm{n}_p^p}
        \norm{\Delta c}_{\frac{6p}{2p+3\alpha}}^2\\
        &\leq C p^3\norm{\Delta
        c}^2_{\frac{6p}{2p+3\alpha}}\norm{n}_p^p+C p^2\norm{\Delta
        c}^2_{\frac{6p}{2p+3\alpha}}.
    \end{align*}
Thus it follows from Gronwall inequality that
    $$
        \norm{n}_p^p\leq\exp{\bke{c p^3\int_0^t\norm{\Delta c(s)}^2_{\frac{6p}{2p+3\alpha}}ds}}
        \int_0^t\norm{\Delta c}^2_\frac{6p}{2p+3\alpha}
        ds+\norm{n_0}_p^p
    $$
and hence \eqref{CKK100-Dec14} holds except for $p=\infty$ whenever
the following holds:
    \begin{equation}\label{eq:remains_to_show}
        \int_0^T\norm{\Delta c(s)}^2_{\frac{6p}{2p+3\alpha}}
        ds<\infty,\quad 1+\alpha<p<\infty.
    \end{equation}
We derive \eqref{eq:remains_to_show} by treating two cases, i.e.
$\alpha>1/3$ and $1/8<\alpha\leq 1/3$ separately.
\\
\\
(Case $\alpha>1/3$):\,\,
 Applying maximal regularity estimate \eqref{heat-maximal}
of heat equation to \eqref{eq:Chemotaxis-approximate}$_2$,
\[
\int_0^T\norm{\Delta
        c(s)}^2_{\frac{6p}{2p+3\alpha}}
        \leq C\bke{\norm{\nabla
        c_0}^2_{\frac{6p}{2p+3\alpha}}+\int_0^T \norm{n(s)}^2_{\frac{6p}{2p+3\alpha}}ds+\int_0^T
        \norm{u\cdot\nabla c}^2_{\frac{6p}{2p+3\alpha}}ds}
\]
\[
        :=C\bke{\norm{\nabla
        c_0}^2_{\frac{6p}{2p+3\alpha}}+\mathrm{I}+\mathrm{II}}.
\]
Now, for $p>1+\alpha,$ we obtain via interpolation
    \begin{align*}
        \mathrm{I}&\leq C\int_0^T \norm{n(s)}_1^{2-\frac{(1+2\alpha)(4p-3\alpha)}
        {2p(1+3\alpha)}}\norm{n(s)}_{3+6\alpha}^{\frac{(1+2\alpha)(4p-3\alpha)}
        {2p(1+3\alpha)}} ds\\
        &\leq C\int_0^T \norm{\nabla
        n^{\frac{1+2\alpha}{2}}(s)}_2^{\frac{4}{1+3\alpha}
        -\frac{3\alpha}{p(1+3\alpha)}}ds
        =C\int_0^T\norm{\nabla
        n^{\frac{1+2\alpha}{2}}(s)}_2^{2-\delta_p}ds
    \end{align*}
for some $\delta_p>0.$ We note that the last equality of the above
holds because$\alpha>1/3.$ Before estimating $\mathrm{II},$ we first
observe that
    \begin{equation}\label{eq:before}
        u\in L^\infty(0,T;L^6(\mathbb
        R^3)).
    \end{equation}
Indeed, since $\tau=0$, the vorticity, $\omega=\nabla\times u$,
satisfies $\omega_t-\Delta\omega=-\nabla\times(n\nabla\phi),$ and
its $L^2$ estimation becomes
    $$
        \frac{d}{dt}\norm{\omega}_2^2+\norm{\nabla\omega}_2^2\leq
        C\int_{\mathbb R^3}\nabla\omega\cdot n
        dx\leq\epsilon\norm{\nabla
        \omega}_2^2+C(\epsilon)\norm{n}_2^2.
    $$
Hence we have
\begin{equation}\label{JK-Dec11-10}
        \frac{d}{dt}\norm{\omega}_2^2\leq C\norm{n}_2^2\leq
        C\norm{n}_{1+\alpha}^{2-\frac{3+3\alpha}{2+3\alpha}}\norm{\nabla
        n^\frac{1+\alpha}{2}}_2^\frac{6}{2+3\alpha}.
\end{equation}
Since $0<\frac{6}{2+3\alpha}<2$ due to $\alpha> 1/3,$ we obtain
\eqref{eq:before} by Sobolev embedding. Next we estimate
$\mathrm{II}$. Applying maximal regularity estimate
\eqref{heat-maximal} for heat equation and \eqref{eq:before}, we
have for $p>1+\alpha,$
    \begin{align*}
        \mathrm{II}&\leq \int_0^T \norm{u(s)}_6^2\norm{\nabla
        c(s)}^2_\frac{6p}{p+3\alpha}ds\leq C\int_0^T\norm{\Delta
        c(s)}^2_\frac{2p}{p+\alpha} ds\\
        &\leq C\bke{\norm{\nabla
        c_0}^2_\frac{2p}{p+\alpha} +\int_0^T (\norm{n(s)}^2_\frac{2p}{p+\alpha}+\norm{u\cdot\nabla
        c(s)}^2_\frac{2p}{p+\alpha} ) ds}\\
        &\leq C+ C\int_0^T
        \bke{\norm{n(s)}_1^{2-\frac{3(1+2\alpha)(p-\alpha)}{2p(1+3\alpha)}}
        \norm{n(s)}_{3+6\alpha}^{\frac{3(1+2\alpha)(p-\alpha)}{2p(1+3\alpha)}}
        +\norm{\nabla c(s)}^2_\frac{6p}{2p+3\alpha}} ds\\
        &\leq C+C\int_0^T \bke{\norm{\nabla n^\frac{1+2\alpha}{2}(s)}_2^{\frac{3}{1+3\alpha}
        -\frac{3\alpha}{p(1+3\alpha)}}+\norm{\nabla
        c(s)}^2_\frac{6p}{2p+3\alpha}} ds\\
        &=:C+C\int_0^T\bke{\norm{\nabla
        n^\frac{1+2\alpha}{2}(s)}_2^{2-\delta'_p}+\norm{\nabla
        c(s)}^2_\frac{6p}{2p+3\alpha}}ds,
    \end{align*}
where
$0<\delta'_p:=\frac{-1+6\alpha}{1+3\alpha}+\frac{3\alpha}{p(1+3\alpha)}<2.$
We note here that for each $q$ with $2\leq q \leq 6,$
$\int_0^T\norm{\nabla c}_q^2 ds<\infty$ due to the results in Lemma
\ref{lemma:weak}. Therefore we conclude that $n\in
L^\infty(0,T;L^p(\mathbb R^3))$ and $\nabla n^\frac{p+\alpha}{2}\in
L^2(0,T;L^2(\mathbb R^3))$ for $\max\{1+\alpha,3\alpha\}<p<\infty,$
which can be easily extended to $1+\alpha<p<\infty$ from
\eqref{eq:from_lemma}. It remains to show that $n$ is bounded.
Indeed, since $n\in L^\infty(0,T;L^p(\mathbb R^3))$ for all $1\le
p<\infty$, we can see that $c_t$, $\nabla^2 c$, $u_t$ and $\nabla^2
u$ belong to $L^q((0,T)\times \mathbb R^3))$ for all $q<\infty$ and
therefore, we also note that $\nabla c\in L^\infty((0,T)\times
\mathbb R^3))$. Using the estimate \eqref{Lp-est-Dec10} and $\nabla
c\in L^\infty((0,T)\times \mathbb R^3))$, we obtain
\[
\frac{d}{dt}\norm{n}_p^p
\leq Cp^2\norm{n}^{p-\alpha}_{{p-\alpha}}\leq
Cp^2\norm{n}^{\frac{\alpha}{p-1}}_{1}\norm{n}^{\frac{p(p-\alpha-1)}{p-1}}_{p}
\leq Cp^2\norm{n}^{p(1-\beta)}_{p},
\]
where $\beta=\alpha/(p-1)$. Via Gronwall's inequality, we observe
that
\[
\norm{n(t)}_{p}\leq (Cp^2t)^{\frac{1}{p}}+\norm{n_0}_{p},\qquad t\le
T.
\]
Passing $p$ to the limit, we obtain for all $t<T$
\begin{equation}\label{JK-Dec10-100}
\norm{n(t)}_{L^{\infty}(\R^3)}\leq 1+\norm{n_0}_{L^{\infty}(\R^3)}.
\end{equation}
This completes the case that $\alpha>1/3$.
\\
\\
(Case $1/8<\alpha\leq 1/3$):\,\, We first show that
    \begin{equation}\label{eq:to_show}
        n\in L^\infty(0,T;L^p(\mathbb R^3))\quad\mathrm{and}\quad \nabla n^\frac{p+\alpha}{2}\in
        L^2(0,T;L^2(\mathbb R^3)),\quad 1\leq p<1+4\alpha
    \end{equation}
and we then derive \eqref{eq:remains_to_show} via
\eqref{eq:to_show}. To show \eqref{eq:to_show}, we compute $L^p$
estimate of $n$ as follows:
    $$
        \norm{n(t)}_p^p+\int_0^t\norm{\nabla
        n^\frac{p+\alpha}{2}(s)}_2^2
        ds\leq\int_0^t\int_{\mathbb R^3}
        n^{p-1}\nabla\cdot(\chi(c)n\nabla c)~dx ds+\norm{n_0}_p^p.
    $$
From Young's inequality, the first term in the righthand side of the
above is estimated as
    \begin{align*}
        &\int_0^t\int_{\mathbb R^3}
        n^{p-1}\nabla\cdot(\chi(c)n\nabla c)~ dxds=-\int_0^t\int_{\mathbb R^3}
        \nabla n^{p-1}\cdot(\chi(c)n\nabla c)~ dxds\\
        \leq &\epsilon_1\int_0^t\norm{\nabla
        n^\frac{p+\alpha}{2}}_2^2ds+C(\epsilon_1)\int_0^t\int_{\mathbb
        R^3}n^{p-\alpha}\abs{\nabla c}^2 dxds.
    \end{align*}
We note that the last integral of the above is estimated as follows:
    \begin{align*}
        &\int_0^t\int_{\mathbb
        R^3}n^{p-\alpha}\abs{\nabla c}^2
        dxds=\int_0^t\int_{\mathbb R^3} n^{p-\alpha}\nabla
        c\cdot\nabla c~dxds\\
        \leq &C\bke{\int_0^t\int_{\mathbb R^3}\abs{\nabla
        n^{p-\alpha}}\abs{\nabla c}~dxds+\int_0^t\int_{\mathbb R^3}
        \abs{n^{p-\alpha}}\abs{\Delta c}~dxds}\\
        \leq &\epsilon_2\int_0^t \norm{\nabla
        n^\frac{p+\alpha}{2}}_2^2ds +C(\epsilon_2)\int_0^t\int_{\mathbb R^3}
        n^{p-3\alpha}\abs{\nabla c}^2~dxds+C\int_0^t\int_{\mathbb
        R^3}\abs{n^{p-\alpha}}\abs{\Delta c}~dxds.
    \end{align*}
Hence choosing sufficiently small $\epsilon_1$ and $\epsilon_2>0,$
we have 
    \begin{align*}
        &\norm{n(t)}_p^p+\int_0^t\norm{\nabla
        n^\frac{p+\alpha}{2}}_2^2~ds\\
        \leq &C\bke{\int_0^t\int_{\mathbb R^3} \abs{n^{p-3\alpha}}\abs{\nabla c}^2dxds
        +\int_0^t\int_{\mathbb R^3}\abs{n^{p-\alpha}}\abs{\Delta c}
        dxds}+\norm{n_0}_p^p\\
        \leq &C\bke{\int_0^t\int_{\mathbb R^3}\abs{n^{p-3\alpha}}\abs{\nabla c}^2 dxds
        +\int_0^t(\norm{n}_{p-\alpha+1}^{p-\alpha+1}+\norm{\Delta
        c}_{p-\alpha+1}^{p-\alpha+1})
        ds}+\norm{n_0}_p^p\\
        :=&C\bke{\mathrm{I}+\mathrm{II}}+\norm{n_0}_p^p.
    \end{align*}
For each $p$ with $1+\alpha<p<1+4\alpha,$ we estimate $\mathrm{I}$
via maximal regularity estimate \eqref{heat-maximal}
    \begin{align}\label{eq:estimate_I}
        \nonumber\mathrm{I}&\leq C\int_0^t
        \norm{n^{p-3\alpha}(s)}_\frac{1+\alpha}{p-3\alpha}\norm{\abs{\nabla
        c(s)}^2}_\frac{1+\alpha}{1+4\alpha-p}ds\\
        &\leq C\int_0^t \norm{\Delta
        c(s)}_{r_1}^2ds
        \leq C\bke{\norm{\nabla
        c_0}_{r_1}^2+\int_0^t (\norm{n(s)}_{r_1}^2+\norm{u\cdot\nabla c(s)}_{r_1}^2)ds },
    \end{align}
where $r_1:=\frac{6+6\alpha}{5+14\alpha-3p}.$ We note that
H$\mathrm{\ddot{o}}$lder inequality is applicable to the first
inequality of the above due to the conditions of $\alpha$ and $p,$
that is, $1/8<\alpha\leq 1/3$ and $1<p<1+4\alpha.$ Now let us
estimate the term $\int_0^t \norm{u\cdot\nabla c}_{r_1}^2 ds$ in
\eqref{eq:estimate_I}. Considering the following $L^2$- estimation
of the vorticity equation:
    $$
        \frac{d}{dt}\norm{\omega}_2^2+\norm{\nabla\omega}_2^2\leq
        C\int_{\mathbb R^3}\nabla\omega\cdot n
        dx\leq\epsilon\norm{\nabla
        \omega}_2^2+C(\epsilon)\norm{n}_2^2,
    $$
we have
\begin{equation}\label{JK-Dec11-20}
        \frac{d}{dt}\norm{\omega}_2^2\leq C\norm{n}_2^2\leq
        C\norm{n}_{1+\alpha}^{2-\frac{3(1+2\alpha)(1-\alpha)}{2+5\alpha}}\norm{\nabla
        n^\frac{1+2\alpha}{2}}_2^\frac{6-6\alpha}{2+5\alpha}.
\end{equation}
Since $0<\frac{6-6\alpha}{2+5\alpha}<2$ due to $1/8<\alpha\leq 1/3,$
we have $$u\in L^\infty(0,T;L^6(\mathbb R^3)).$$ Taking into account
that $1\leq r_1<3$ and the above, we note, due to
H$\mathrm{\ddot{o}}$lder inequality, that
    \begin{align*}
        \int_0^t \norm{u\cdot \nabla c}_{r_1}^2 ds&\leq\int_0^t
        \norm{u}_6^2\norm{\nabla c}_{\frac{6r_1}{6-r_1}}^2 ds
        \leq C\int_0^t \norm{\Delta c}_\frac{6r_1}{6+r_1}^2 ds\\
        &\leq C\bke{\norm{\nabla
        c_0}_\frac{6r_1}{6+r_1}^2+\int_0^t (\norm{n}_\frac{6r_1}{6+r_1}^2+
        \norm{u\cdot\nabla c}_\frac{6r_1}{6+r_1}^2) ds}\\
        &\leq C\bke{\norm{\nabla
        c_0}_\frac{6r_1}{6+r_1}^2+\int_0^t (\norm{n}_\frac{6r_1}{6+r_1}^2+
        \norm{\nabla c}_{r_1}^2) ds}.
    \end{align*}
Substituting it to \eqref{eq:estimate_I} and applying interpolation
inequality, we obtain
\[
\mathrm{I}\leq C\bke{\norm{\nabla
c_0}_{\frac{6r_1}{6+r_1}}^2+\norm{\nabla
c_0}_{r_1}^2+\int_0^t(\norm{n}_{1+\alpha}^{2(1-\theta_1)}
\norm{n}_{3p+3\alpha}^{2\theta_1}
+\norm{n}_1^{2(1-\theta_2)}\norm{n}_{3p+3\alpha}^{2\theta_2}+\norm{\nabla
c}_{r_1}^2) ds},
\]
where $\theta_1=\frac{(p+\alpha)(3p-14\alpha+1)}{2(3p+2\alpha-1)}$
and
$\theta_2=\frac{3(p+\alpha)(p-3\alpha)}{2(1+\alpha)(3p+3\alpha-1)}.$
We remark that it is not difficult to verify
$1+\alpha<r_1<3p+3\alpha$ and $1<\frac{6r_1}{6+r_1}<3p+3\alpha$
whenever $1/8<\alpha\leq 1/3$ and $1+\alpha<p<1+4\alpha,$ which
enables us to apply the interpolation inequality to the above.
Therefore, we have
    \begin{align*}
        \mathrm{I}&\leq C\bke{\norm{\nabla c_0}_{\frac{6r_1}{6+r_1}}^2+\norm{\nabla
        c_0}_{r_1}^2+\int_0^t (\norm{\nabla n^\frac{p+\alpha}{2}}_2^\frac{4\theta_1}{p+\alpha}
        +\norm{\nabla n^\frac{p+\alpha}{2}}_2^\frac{4\theta_2}{p+\alpha}+\norm{\nabla
        c}_{r_1}^2)
        ds }\\
        &=: C\bke{\norm{\nabla c_0}_{\frac{6r_1}{6+r_1}}^2+\norm{\nabla
        c_0}_{r_1}^2+\int_0^t (\norm{\nabla n^\frac{p+\alpha}{2}}_2^{\delta_1}
        +\norm{\nabla n^\frac{p+\alpha}{2}}_2^{\delta_2}+\norm{\nabla
        c}_{r_1}^2)
        ds}.
    \end{align*}
Similarly, for each $p$ with $1+\alpha<p<1+4\alpha,$ we estimate
    \begin{align*}
        \mathrm{II}&\leq
        C\int_0^t(\norm{n}_{r_2}^{r_2}+\norm{u}_6^{r_2}\norm{\nabla
        c}^{r_2}_\frac{6r_2}{6-r_2})
        ds+\norm{\nabla c_0}_{r_2}^{r_2}\\
        &\leq C\bke{\int_0^t(\norm{n}_{r^2}^{r_2}+\norm{n}_\frac{6r_2}{6+r_2}^{r_2}+\norm{\nabla c}_{r_2}^{r_2}) ds+
        \norm{\nabla c_0}_{\frac{6r_2}{6+r_2}}^{r_2}+\norm{\nabla
        c_0}_{r_2}^{r_2}},
    \end{align*}
where $r_2:=p-\alpha+1.$ We note that in the first inequality of the
above, H$\mathrm{\ddot{o}}$lder inequality is applied due to the
conditions of $\alpha$ and $p,$ that is, $1/8<\alpha\leq 1/3$ and
$1+\alpha<p<1+4\alpha$. Since $1+\alpha<r_2<3p+3\alpha,$
$1<\frac{6r_2}{6+r_2}<3p+3\alpha$ and $2<r_2<6$, we obtain the
following estimate via applying interpolation inequality:
    \begin{align*}
        \mathrm{II}&\leq C\Big(\int_0^t(\norm{n}_{1+\alpha}^{r_2(1-\theta_3)}\norm{n}_{3p+3\alpha}^{r_2\theta_3}
        +\norm{n}_{1}^{r_2(1-\theta_4)}\norm{n}_{3p+3\alpha}^{r_2\theta_4})
        ds\\
        &\qquad\quad+\int_0^t(\norm{\nabla c}_{2}^{r_2(1-\theta_5)}\norm{\nabla c}_{6}^{r_2\theta_5})ds
        +\norm{\nabla c_0}_{\frac{6r_2}{6+r_2}}^{r_2}+\norm{\nabla
        c_0}_{r_2}^{r_2}\Big),
    \end{align*}
where
$$\theta_3:=\frac{3(p+\alpha)(r_2-1-\alpha)}{r_2(3p+2\alpha-1)},~\theta_4:=\frac{(p+\alpha)(5r_2-6)}{r_2(6p+6\alpha-2)}
\quad\mathrm{and}~\theta_5:=\frac{3(p-\alpha-1)}{2r_2}.$$ Therefore,
we have
    \begin{align*}
        \mathrm{II}&\leq C\bke{\int_0^t(\norm{\nabla n^\frac{p+\alpha}{2}}_{2}^{\frac{2r_2\theta_3}{p+\alpha}}
        +\norm{\nabla n^\frac{p+\alpha}{2}}_{2}^{\frac{2r_2\theta_4}{p+\alpha}}+\norm{\Delta c}_2^{r_2\theta_5})ds
        +\norm{\nabla c_0}_{\frac{6r_2}{6+r_2}}^{r_2}+\norm{\nabla
        c_0}_{r_2}^{r_2}}\\
        &=: C\bke{\int_0^t(\norm{\nabla n^\frac{p+\alpha}{2}}_{2}^{\delta_3}
        +\norm{\nabla n^\frac{p+\alpha}{2}}_{2}^{\delta_4}+\norm{\Delta c}_2^{\delta_5})ds
        +\norm{\nabla c_0}_{\frac{6r_2}{6+r_2}}^{r_2}+\norm{\nabla
        c_0}_{r_2}^{r_2}}.
    \end{align*}
Since one can see that $0<\delta_i<2,~i=1,2,...,5$ whenever
$1/8<\alpha\leq 1/3$ and $1+\alpha<p<1+4\alpha,$ it follows from
Young's inequality that
    $$
        \norm{n(t)}_p^p+C_1\int_0^t\norm{\nabla
        n^\frac{p+\alpha}{2}}_2^2 ds\leq C_2\bke{\int_0^T\norm{\nabla c}_{r_1}^2
        ds+1},\quad 0<t<T
    $$
for $p$ with $1+\alpha<p<1+4\alpha.$ We know that, for each $q$ with
$2\leq q \leq 6,$ $\int_0^T\norm{\nabla c}_q^2 ds<\infty.$ Since
$2\leq r_1\leq 6$ whenever $\frac{2+11\alpha}{3}<p<1+4\alpha,$ we
have $n\in L^\infty(0,T;L^p(\mathbb R^3))$ and $\nabla
n^\frac{p+\alpha}{2}\in L^2(0,T;L^2(\mathbb R^3)),$ for
$\frac{2+11\alpha}{3}<p<1+4\alpha,$ which can be easily extended to
$1+\alpha<p<1+4\alpha$ from \eqref{eq:from_lemma}. Thus we conclude
that \eqref{eq:to_show} holds.

We are now ready to derive \eqref{eq:remains_to_show} for the case
$1/8<\alpha\leq 1/3.$ Let $p_0=\frac{3}{2}-\frac{3\alpha}{4}.$ Since
$1\leq p_0<1+4\alpha$ whenever $1/8<\alpha\leq 1/3,$ it is evident
from the above conclusion that
    \begin{equation}\label{eq:evident}
        n\in L^\infty(0,T;L^{p_0}(\mathbb
        R^3)).
    \end{equation}
From the maximal regularity estimate \eqref{heat-maximal} for heat
equation, we have
    \begin{align*}
        \int_0^T\norm{\Delta c}_{\frac{6p}{2p+3\alpha}}^{2}
        &\leq C\bke{\int_0^T \norm{n}_{\frac{6p}{2p+3\alpha}}^{2}ds+
        \int_0^T\norm{u\cdot\nabla c}_{\frac{6p}{2p+3\alpha}}^{2}
        ds}\\
        &=: C\bke{\mathrm{I}+\mathrm{II}}.
    \end{align*}
For $p>1+\alpha,$ we have $p_0<\frac{6p}{2p+3\alpha}<3p_0+3\alpha$
and hence it follows from \eqref{eq:evident} that
    \begin{align*}
        \mathrm{I}&\leq C\int_0^T \norm{n(s)}_{p_0}^{2-\frac{(p_0+\alpha)(6p-2pp_0-3\alpha p_0)}
        {2(2p_0+3\alpha)}}\norm{n(s)}_{3p_0+3\alpha}^{\frac{(p_0+\alpha)(6p-2pp_0-3\alpha p_0)}
        {2(2p_0+3\alpha)}} ds\\
        &\leq C\int_0^T \norm{\nabla
        n^{\frac{p_0+\alpha}{2}}(s)}_2^{\frac{12-4p_0}{2p_0+3\alpha}
        -\frac{6\alpha p_0}{p(2p_0+3\alpha)}}ds\\
        &=C\int_0^T\norm{\nabla
        n^{\frac{p_0+\alpha}{2}}(s)}_2^{2-\delta_p}ds
    \end{align*}
for some $\delta_p>0.$ On the other hands, by the maximal regularity
estimate \eqref{heat-maximal} for the heat equation, we have for
$p>\frac{\alpha(1+\alpha)}{1-\alpha}$ (in fact, $p>1+\alpha$ since
$0<\frac{\alpha(1+\alpha)}{1-\alpha}<1+\alpha$)
    \begin{align*}
        \mathrm{II}&\leq \int_0^T \norm{u(s)}_6^2\norm{\nabla
        c(s)}^2_\frac{6p}{p+3\alpha}ds\leq C\int_0^T\norm{\Delta
        c(s)}^2_\frac{2p}{p+\alpha} ds\\
        &\leq C\bke{\norm{\nabla c_0}^2_\frac{2p}{p+\alpha}+\int_0^T (\norm{n(s)}^2_\frac{2p}{p+\alpha}+\norm{u\cdot\nabla
        c(s)}^2_\frac{2p}{p+\alpha} )ds}\\
        &\leq C\bke{\norm{\nabla c_0}^2_\frac{2p}{p+\alpha}+\int_0^T
        (\norm{n(s)}_{1+\alpha}^{2-\frac{3(1+2\alpha)(p-\alpha p-\alpha-\alpha^2)}{p(2+5\alpha)}}
        \norm{n(s)}_{3+6\alpha}^{\frac{3(1+2\alpha)(p-\alpha p-\alpha-\alpha^2)}{p(2+5\alpha)}}
        +\norm{\nabla c(s)}^2_\frac{6p}{2p+3\alpha}) ds}\\
        &\leq C\bke{\norm{\nabla c_0}^2_\frac{2p}{p+\alpha}+
        \int_0^T (\norm{\nabla n^\frac{1+2\alpha}{2}(s)}_2^{\frac{6-6\alpha}{2+5\alpha}
        -\frac{6\alpha+6\alpha^2}{p(2+5\alpha)}}+\norm{\nabla
        c(s)}^2_\frac{6p}{2p+3\alpha}) ds}\\
        &=:C\bke{\norm{\nabla c_0}^2_\frac{2p}{p+\alpha}+\int_0^T(\norm{\nabla
        n^\frac{1+2\alpha}{2}(s)}_2^{2-\delta'_p}+\norm{\nabla
        c(s)}^2_\frac{6p}{2p+3\alpha})ds},
    \end{align*}
where
$0<\delta'_p:=\frac{-2+16\alpha}{2+5\alpha}+\frac{6\alpha+6\alpha^2}{p(2+5\alpha)}<2.$
We remark that $\int_0^T\norm{\nabla c}_q^2 ds<\infty$ for each $q$
with $2\leq q \leq 6$, due to the results in Lemma \ref{lemma:weak}.

So far, we have shown \eqref{eq:remains_to_show} holds for each
$\alpha>1/8.$ Therefore we conclude that $n\in
L^\infty(0,T;L^p(\mathbb R^3))$ for $1+\alpha<p<\infty.$ Following
similar procedures as in \eqref{JK-Dec10-100}, we can show that
$L^{\infty}$-norm of $n$ is bounded. Since arguments are on the same
track, we omit the details.
\end{proof}

\section{Proofs of Theorems}
To prove Theorem \ref{thm:3D-existence} and Theorem
\ref{thm:3D-L^infinite bound}, using the uniform estimates
established previously, we construct weak and bounded weak
solutions. Using the uniform estimates established in the previous
section. Since the argument is rather standard (compare to
\cite{CKL,FLM,TW_1,TW_2,V}), we omit the details and give the sketch
of how our constructions are made instead.
\\
\begin{main-pfs}
We consider only the case of Theorem \ref{thm:3D-L^infinite bound},
since the proof of Theorem \ref{thm:3D-existence} is essentially
same. First, we recall the regularized system \eqref{eq:Chemotaxis}
with the initial data $(n_{0\varrho},c_{0,\varrho},u_{0\varrho})$
which are chosen as smooth approximations of $(n_0,c_0,u_0)$:
    \begin{align*}
        n_{0\varrho}=\psi_\varrho\ast n_0,\quad c_{0\varrho}=\psi_\varrho\ast c_0
        \quad\mathrm{and}\quad u_{0\varrho}=\psi_\varrho\ast u_0,
    \end{align*}
where $\phi_\varrho$ denotes the usual mollifier. The convergence of
$(n_{0\varrho},c_{0\varrho},u_{0\varrho})$ entails that the
estimates obtained in Lemma \ref{lemma:weak} and Lemma
\ref{lemma:bdd-weak} are uniform, independent of $\varrho$,
precisely, the constant $C$ and $M$ in \eqref{eq:E'+D<CE} can be
chosen independent of $\varrho.$ Likewise, there exists a constant
$C$ such that for $q<\infty$
\begin{equation}\label{CKK1000-Dec16}
\norm{n_{\varrho}}_{L^{\infty}((0,T)\times \R^3)}+\norm{\nabla
n^{\frac{q+\alpha}{2}}_{\varrho}}_{L^2((0,T)\times
\R^3)}<C,
\end{equation}
\begin{equation}\label{CKK2000-Dec16}
\norm{c_{\varrho}}_{L^\infty(0,T;W^{1,q}(\mathbb
R^3))}+\norm{c_{\varrho}}_{L^q(0,T;W^{2,q}(\mathbb
R^3))}+\norm{\partial_t c_{\varrho}}_{L^q(0,T;L^q(\mathbb
R^3))}<C,
\end{equation}
\begin{equation}\label{CKK3000-Dec16}
\norm{u_{\varrho}}_{L^\infty(0,T;W^{1,q}(\mathbb
R^3))}+\norm{u_{\varrho}}_{L^q(0,T;W^{2,q}(\mathbb
R^3))}+\norm{\partial_t u_{\varrho}}_{L^q(0,T;L^q(\mathbb
R^3))}<C.
\end{equation}
According to the estimates we have derived, a bootstrap argument can
extend the local solution to any given time interval $(0,T)$
(compare to \cite{FLM}, \cite{TW_2} and \cite{SK} for more detail).
Let $k$ be any number with $k\geq 2+\alpha$. We then show that
$\partial_t n_{\varrho}$ and $\partial_t n^k_{\varrho}$ are,
independent of $\varrho$, in $L^1(0,T;W^{-2,2}(\mathbb R^3))$, where
$W^{-2,2}(\mathbb R^3)$ is the dual space of $W^{2,2}(\mathbb R^3)$
(compare to \cite{TW_2}). Then via Aubin-Lions Lemma, by passing to
the limit, we have some weak limit $(n,c,u)$, which turns out to be
a weak solution. Its verification is rather straightforward, and
thus the details are skipped. It is also direct that $(n,c,u)$ is a
bounded weak solution and satisfies the estimates
\eqref{CKK1000-Dec16}-\eqref{CKK3000-Dec16}. This completes the
proof.
\end{main-pfs}

Next we present the proofs of Theorem \ref{bounded-domain-10} and
Theorem \ref{bounded-domain-20}. As mentioned in the Introduction,
we indicate only difference compared to the case of $\R^3$.

\begin{main-pfs-domains}
We note that unlike $\R^3$, $L^1$ estimate of $n\langle x\rangle$ is
not necessary, because negative part of $\int_{\Omega}n\log n$ is
controlled by $\norm{n}_{L^1(\Omega)}$. We also observe that
Gagliardo-Nierenberg inequality should be slightly modified, for
example, we used in the case $\R^3$ (see the inequality right above
\eqref{eq:A4})
\[
\norm{n}_{L^2(\R^3)}^2\leq
C\norm{n}_{L^1(\R^3)}^\frac{1+6\alpha}{2+6\alpha}\norm{\nabla
n^\frac{1+2\alpha}{2}}_{L^2(\R^3)}^\frac{6}{2+6\alpha}.
\]
In the case of bounded domains, it is replaced by
\[
\norm{n}_{L^2(\Omega)}^2\leq
C\norm{n}_{L^1(\Omega)}^\frac{1+6\alpha}{2+6\alpha}\norm{\nabla
n^\frac{1+2\alpha}{2}}_{L^2(\Omega)}^\frac{6}{2+6\alpha}+C\norm{n}_{L^1(\Omega)}^2.
\]
Major modifications lie in the estimate of vorticity, $\omega$,
because the boundary condition of $\omega$ is not prescribed. More
precisely, in order to obtain \eqref{eq:before}, we used the
equation of vorticity, which is not useful to the case of bounded
domains. Here we show \eqref{eq:before} differently not by using
vorticity equations. Let $Q=\Omega\times (0,T)$. Using $L^p-$type
estimate of the Stokes system (see e.g. \cite{GS}), we note that
\begin{equation}\label{JK-Dec-100}
\norm{u_t}_{L^2(Q)}+\norm{u}_{L^2((0,T);H^{2}(\Omega))}+\norm{\nabla
p}_{L^2(Q)}\leq C\bke{\norm{n}_{L^2(Q)}+\norm{u_0}_{H^{1}(\Omega)}}.
\end{equation}
Testing $-\Delta u$ to the fluid equations,
\[
\frac{1}{2}\frac{d}{dt}\int_{\Omega}\abs{\nabla
u}^2dx+\int_{\Omega}\abs{\Delta u}^2dx\leq
\frac{1}{4}\int_{\Omega}\abs{\Delta u}^2dx+C\int_{\Omega}
n^2dx+\int_{\Omega}\abs{\Delta u}\abs{\nabla p}dx
\]
\[
\leq \frac{1}{2}\int_{\Omega}\abs{\Delta u}^2dx+C\int_{\Omega}
n^2dx+C\int_{\Omega}\abs{\nabla p}^2dx.
\]
Therefore, after integrating the above in time over $(0, t)$ for any
$t<T$ and combining the estimate \eqref{JK-Dec-100}, we obtain
\[
\norm{\nabla u(t)}^2_{L^2(\Omega)}+\norm{\Delta u}^2_{L^2(Q_t)}\leq
\norm{\nabla u_0}^2_{L^2(\Omega)}
+C\bke{\norm{n}^2_{L^2(Q_t)}+\norm{\nabla p}^2_{L^2(Q_t)}}
\]
\[
\leq C\bke{\norm{u_0}_{W^{1,2}(\Omega)}+\norm{n}^2_{L^2(Q_t)}},
\]
where $Q_t=(0,t)\times \Omega$ and we used \eqref{JK-Dec-100}. We
note that $L^2$ norm of $n$ can be estimated in the same ways as in
\eqref{JK-Dec11-10} and \eqref{JK-Dec11-10}, which implies that
$\nabla u\in L^{\infty}((0,T);L^2(\Omega))$ and therefore, it is
automatic that $u\in L^{\infty}((0,T);L^6(\Omega))$ via Sobolev
embedding. The rest parts of proofs are essentially the same as the
cases of whole space, and thus we omit the details.
\end{main-pfs-domains}

\section*{Acknowledgments}
Yun-Sung Chung's work is supported by NRF-2012R1A1A2001373 and
Kyungkeun Kang's work is supported by NRF-2014R1A2A1A11051161.

\begin{equation*}
\left.
\begin{array}{cc}
{\mbox{Yun-Sung Chung}}\qquad&\qquad {\mbox{Kyungkeun Kang}}\\
{\mbox{Department of Mathematics }}\qquad&\qquad
 {\mbox{Department of Mathematics}} \\
{\mbox{Yonsei University
}}\qquad&\qquad{\mbox{Yonsei University}}\\
{\mbox{Seoul, Republic of Korea}}\qquad&\qquad{\mbox{Seoul, Republicof Korea}}\\
{\mbox{ysjung93@hanmail.net }}\qquad&\qquad
{\mbox{kkang@yonsei.ac.kr }}
\end{array}\right.
\end{equation*}

\end{document}